\makeatletter \@addtoreset{equation}{section} \makeatother
\renewcommand\thetable{\thesection.\@arabic\c@table}
\theoremstyle{plain}
\newtheorem{maintheorem}{Theorem}
\newtheorem{maincorollary}{Corollary}
\newtheorem{theorem}{Theorem}[section]
\newtheorem{lemma}{Lemma}[section]
\newtheorem{definition}{Definition}[section]
\newtheorem{remark}{Remark}[section]
\newtheorem{example}{Example}[section]
\theoremstyle{remark}
\long\def\begcom#1\endcom{}
\newcommand{\length}{\operatorname{\length}}
\def\length{\operatorname{length}}
\newcommand{\bl} {\begin{lemma}}
\newcommand{\el} {\end{lemma}}
\newcommand{\bt} {\begin{theorem}}
\newcommand{\et} {\end{theorem}}
\newcommand{\bp}{\begin{proof}}
\newcommand{\ep}{\end{proof}}
\newcommand  {\ee} {\end{equation}}
\newcommand  {\beq} {\begin{eqnarray*}}
\newcommand  {\eeq} {\end{eqnarray*}}
\newcommand  {\bd} {\begin{definition}}
\newcommand  {\ed} {\end{definition}}
\newcommand{\diam}{\operatorname{diam}}
\newcommand{\cM}{\mathcal{M}}
\def\ep{\noindent{\hfill $\Box$}}
\begin{document}
\setlength{\footskip}{50pt}

\title{Uniqueness of ergodic optimization of top Lyapunov exponent for typical matrix cocycles}

\author{Wanshan Lin and Xueting Tian}

\address{Wanshan Lin, School of Mathematical Sciences,  Fudan University\\Shanghai 200433, People's Republic of China}
\email{21110180014@m.fudan.edu.cn}

\address{Xueting Tian, School of Mathematical Sciences,  Fudan University\\Shanghai 200433, People's Republic of China}
\email{xuetingtian@fudan.edu.cn}

\begin{abstract}
In this article, we consider the ergodic optimization of the top Lyapunov exponent. We prove that there is a unique maximising measure of top Lyapunov expoent for typical matrix cocyles. By using the results we obtain, we prove that in any non-uniquely ergodic minimal dynamical system, the Lyapunov-irregular points are typical for typical matrix cocyles.
\end{abstract}

\keywords{Ergodic optimization, Lyapunov exponent, Residual property}
\subjclass[2020] { 37A05; 37B05. }
\maketitle

\section{Introduction}

Let $(X,d)$ be a compact metric space, and $T:X \rightarrow X$ be a continuous map. Such $(X,T)$ is called a dynamical system. Let $\cM(X)$, $\cM(X,T)$, $\cM^{e}(X,T)$ denote the spaces of probability measures, $T$-invariant, $T$-ergodic probability measures, respectively. Let $\mathbb{Z}$, $\mathbb{N}$, $\mathbb{N^{+}}$ denote integers, non-negative integers, positive integers, respectively. Let $C(X)$ denote the space of real continuous functions on $X$ with the norm $\|f\|:=\sup\limits_{x\in X}|f(x)|.$ A sequence $\{f_n\}_{n=1}^{+\infty}$ from $X$ to $\mathbb{R}\cup\{-\infty\}$ will be called \emph{subadditive} if, for any $x\in X$ and any $n,m\in\mathbb{N^+}$, the inequality $f_{n+m}(x)\leq f_n(T^mx)+f_m(X)$ is satisfied. In a Baire space, a set is said to be residual if it has a dense $G_\delta$ subset.

For any $f\in C(X)$, we define the \emph{maximum ergodic average} $\beta(f):=\sup\limits_{\mu\in\cM(X,T)}\int f\mathrm{d\mu}.$ And the set of all \emph{maximising measures} $\cM_{max}(f):=\left\{\mu\in\cM(X,T)\mid \int f\mathrm{d\mu}=\beta(f)\right\}.$ When $f\in C(X)$, the study of the functional $\beta$ and $\cM_{max}(f)$ has been termed \emph{ergodic optimisation of Birkhoff averages}, and has attracted some research interest, see \cite{B2018,J2006,J2006ii,M2010} for more information.

In this article, we will pay attention to the ergodic optimization of the top Lyapunov exponent. Let $C(X,GL_d(\mathbb{R}))$ denote the space of all continuous functions $X\to GL_d(\mathbb{R})$. For continuous functions $A,B\in C(X,GL_d(\mathbb{R}))$, we use the metric $\rho(A,B):=\max\limits_{x\in X}\{\|A(x)-B(x)\|+\|A(x)^{-1}-B(x)^{-1}\|\}$, which makes $C(X,GL_d(\mathbb{R}))$ a complete metric space.

For any $A\in C(X,GL_d(\mathbb{R}))$, any $n\in\mathbb{N^+}$, a \emph{cocycle} $A(n,x)$ is defined as $A(n,x)=A(T^{n-1}x)\cdots A(x)$. Then $\{log\|A(n,x)\|\}_{n=1}^{+\infty}$ is a subadditive sequence of $C(X)$. For any $\mu\in\cM(X,T)$, by Kingman subadditive ergodic theorem, we have that $\lim\limits_{n\to+\infty}\frac{log\|A(n,x)\|}{n}$ exists and is called the \emph{top Lyapunov exponent} at $x$ for $\mu$-a.e. $x$. Moreover, there is a $T$-invariant function $\phi(x)$ such that $\phi(x)=\lim\limits_{n\to+\infty}\frac{log\|A(n,x)\|}{n}$ for $\mu$-a.e. $x$. and $\int \phi(x)\mathrm{d\mu}=\lim\limits_{n\to+\infty}\frac{1}{n}\int log\|A(n,x)\|\mathrm{d\mu}=\inf\limits_{n\geq1}\frac{1}{n}\int log\|A(n,x)\|\mathrm{d\mu}$. We define the \emph{maximum ergodic average} of $A$ to be the quantity $$\beta(A):=\sup_{\mu\in\cM(X,T)}\inf\limits_{n\geq1}\frac{1}{n}\int log\|A(n,x)\|\mathrm{d\mu}.$$ We shall say that $\mu\in\cM(X,T)$ is a \emph{maximising measure} for $A$ if $\inf\limits_{n\geq1}\frac{1}{n}\int log\|A(n,x)\|\mathrm{d\mu}=\beta(A)$, and denote the set of all maximising measures by $\cM_{max}(A)$.

Generally, for a subadditive sequence $\{f_n\}_{n=1}^{+\infty}$ of upper semicontimuous functions from $X$ to $\mathbb{R}\cup\{-\infty\}$, we can define the \emph{maximum ergodic average} $$\beta[(f_n)]:=\sup_{\mu\in\cM(X,T)}\inf\limits_{n\geq1}\frac{1}{n}\int f_n\mathrm{d\mu}.$$ And the set of all \emph{maximising measures} $$\cM_{max}[(f_n)]:=\left\{\mu\in\cM(X,T)\mid \inf\limits_{n\geq1}\frac{1}{n}\int f_n\mathrm{d\mu}=\beta[(f_n)]\right\}.$$ 

As for the set $\cM_{max}[(f_n)]$ and $\beta[(f_n)]$, Ian D. Morris proved
\begin{theorem}\cite[Proposition A.5]{M2013}\label{the1.1}
	Suppose that $\{f_n\}_{n=1}^{+\infty}$ is a subadditive sequence of upper semicontimuous functions from $X$ to $\mathbb{R}\cup\{-\infty\}$. Then $\cM_{max}[(f_n)]$ is compact, convex and nonempty, and the extreme points of $\cM_{max}[(f_n)]$ are precisely the ergodic elements of $\cM_{max}[(f_n)]$.
\end{theorem}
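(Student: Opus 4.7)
The plan is to study the functional $g: \cM(X,T) \to \mathbb{R} \cup \{-\infty\}$ defined by
$$g(\mu) := \inf_{n \geq 1} \frac{1}{n} \int f_n \, d\mu,$$
and to reduce all four assertions (nonemptiness, compactness, convexity, and the extreme-point characterization) to two structural properties of $g$ on the compact convex space $\cM(X,T)$: that $g$ is affine, and that $g$ is upper semicontinuous.

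For affinity, the key observation is that for fixed $\mu \in \cM(X,T)$ the numerical sequence $a_n(\mu) := \int f_n \, d\mu$ is subadditive in $n$: using $T$-invariance of $\mu$, integrating the inequality $f_{n+m}(x) \leq f_n(T^m x) + f_m(x)$ gives $a_{n+m}(\mu) \leq a_n(\mu) + a_m(\mu)$. Fekete's lemma then yields $g(\mu) = \lim_{n\to\infty}\frac{1}{n}\int f_n \, d\mu$, and this pointwise limit of affine-in-$\mu$ functions is itself affine. For upper semicontinuity, each $f_n$ is upper semicontinuous on the compact space $X$ and is therefore bounded above, and the map $\mu \mapsto \int f_n \, d\mu$ is upper semicontinuous on $\cM(X)$ in the weak-$\ast$ topology (a standard consequence of the Portmanteau theorem, proved by approximating $f_n$ from above by continuous functions). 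The infimum of a family of upper semicontinuous functions is upper semicontinuous, so $g$ is usc.

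Given these two properties, all four claims follow quickly. Since $\cM(X,T)$ is weak-$\ast$ compact and $g$ is usc, $\beta[(f_n)] = \sup g$ is attained, so $\cM_{max}[(f_n)]$ is nonempty. The super-level set $\{\mu : g(\mu) \geq \beta[(f_n)]\}$ is closed in a compact space, hence compact, and the affinity of $g$ immediately gives convexity (if $\mu_1,\mu_2\in\cM_{max}[(f_n)]$ and $t\in[0,1]$, then $g(t\mu_1+(1-t)\mu_2) = tg(\mu_1)+(1-t)g(\mu_2) = \beta[(f_n)]$). For the extreme-point characterization, any ergodic measure is already an extreme point of $\cM(X,T)$, hence extreme in the smaller convex set $\cM_{max}[(f_n)]$; conversely, if $\mu \in \cM_{max}[(f_n)]$ is not ergodic then the ergodic decomposition (or a direct splitting along a nontrivial invariant set) writes $\mu = t\mu_1 + (1-t)\mu_2$ with distinct $\mu_1, \mu_2 \in \cM(X,T)$ and $t \in (0,1)$, and by affinity of $g$ together with $g(\mu_i) \leq \beta[(f_n)]$ both $\mu_i$ must lie in $\cM_{max}[(f_n)]$, contradicting extremality of $\mu$ there. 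The one point to watch is the possibility that $\int f_n \, d\mu = -\infty$ for some $\mu,n$, but Fekete, affinity, and upper semicontinuity all continue to make sense in $\mathbb{R}\cup\{-\infty\}$, and if $\beta[(f_n)] = -\infty$ the theorem degenerates to the classical statement that $\cM^e(X,T)$ consists of the extreme points of $\cM(X,T)$.
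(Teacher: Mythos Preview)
The paper does not prove this statement; it is quoted verbatim from \cite[Proposition~A.5]{M2013} as a preliminary result, with no argument supplied. There is therefore nothing to compare your proposal against in the present paper.

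That said, your argument is correct and is essentially the standard one (and, up to presentation, the one Morris gives): the map $g(\mu)=\inf_{n}\frac{1}{n}\int f_n\,d\mu$ is affine (via Fekete and $T$-invariance) and upper semicontinuous (as an infimum of usc maps $\mu\mapsto\int f_n\,d\mu$), and these two facts on the compact convex set $\cM(X,T)$ immediately give nonemptiness, compactness, convexity, and the extreme-point characterization. Your handling of the $-\infty$ case is also fine, since each $f_n$ is bounded above on the compact space $X$, so the integrals and their convex combinations behave correctly in $\mathbb{R}\cup\{-\infty\}$.
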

\begin{theorem}\cite[Theorem A.3]{M2013}\label{the1.2}
	Suppose that $\{f_n\}_{n=1}^{+\infty}$ is a subadditive sequence of $C(X)$. Then,
	\[
	\begin{split}
		\beta[(f_n)]&=\sup_{\mu\in\cM^{e}(X,T)}\inf\limits_{n\geq1}\frac{1}{n}\int f_n\mathrm{d\mu}=\inf\limits_{n\geq1}\sup_{x\in X}\frac{1}{n}f_n(x)\\&=\inf\limits_{n\geq1}\frac{1}{n}\sup_{\mu\in\cM(X,T)}\int f_n\mathrm{d\mu}=\sup_{x\in X}\inf\limits_{n\geq1}\frac{1}{n}f_n(x).
	\end{split}
	\]
	In all but the last of these expressions, the infimum over all $n\geq1$ may be replaced with the limit as $n\to+\infty$ of the same quantity, without altering the value of the expression. Furthermore, every supremum arising in each of the above expressions is attained.
\end{theorem}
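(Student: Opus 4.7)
The plan is to prove the four claimed identities using one of the following tools: Fekete's lemma for a scalar subadditive sequence, the variational structure of $\cM_{max}[(f_n)]$ from Theorem \ref{the1.1}, or a weak-$*$ construction of a maximising invariant measure; the last identity, for which $\inf_n$ cannot be replaced by $\lim_n$, is treated separately by a compactness argument combined with a greedy stopping-time decomposition. For notational convenience set $a_n(\mu) := \int f_n \, d\mu$ for $\mu \in \cM(X,T)$, $c_n := \sup_{\mu \in \cM(X,T)} a_n(\mu)$, and $b_n := \sup_{x \in X} f_n(x)$. Each is subadditive in $n$: for $a_n(\mu)$ via $T$-invariance applied to $f_{n+m}(x) \le f_n(T^m x) + f_m(x)$, and for $c_n, b_n$ by taking the respective supremum in the same inequality. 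Fekete's lemma then gives $\lim = \inf$ for $a_n(\mu)/n$, $c_n/n$ and $b_n/n$, justifying the replacement of $\inf_n$ by $\lim_n$ in the first three displayed expressions; continuity of $f_n$ together with compactness imply that $b_n$ and $c_n$ are attained. The identity $\beta[(f_n)] = \sup_{\mu \in \cM^{e}(X,T)} \inf_n a_n(\mu)/n$ is then a direct consequence of Theorem \ref{the1.1}: the nonempty compact convex set $\cM_{max}[(f_n)]$ admits ergodic extreme points, and any such point is an ergodic maximiser.

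The central equality is $\beta[(f_n)] = \inf_n b_n/n =: L$; combined with the trivial $\beta[(f_n)] \le \inf_n c_n/n \le \inf_n b_n/n$ it also yields $\beta[(f_n)] = \inf_n c_n/n$. To prove $\beta[(f_n)] \ge L$ I build a maximising invariant measure: pick $x_n$ with $f_n(x_n) = b_n$, form the empirical measures $\nu_n := \frac{1}{n}\sum_{k=0}^{n-1}\delta_{T^k x_n}$, and extract a weak-$*$ subsequential limit $\mu$, which is $T$-invariant by the standard Krylov--Bogolyubov argument. The technical crux is the pointwise estimate
\[
\sum_{k=0}^{n-1} f_m(T^k x) \;\ge\; m\, f_n(x) - C_m \qquad (n \ge m),
\]
derived by iterating subadditivity along each arithmetic progression of step $m$ starting at a residue $r \in \{0,\dots,m-1\}$, summing the $m$ resulting inequalities, and absorbing the $O(m)$ boundary terms into a constant $C_m$ that depends only on $f_1,\dots,f_m$. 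Applied at $x_n$ and divided by $n$, the estimate gives $\int f_m \, d\nu_n \ge m b_n/n - C_m/n$, and the weak-$*$ limit yields $\frac{1}{m}\int f_m \, d\mu \ge L$ for every $m$, whence $\lambda(\mu) \ge L$ and so $\beta[(f_n)] \ge L$.

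For the last identity the direction $\sup_x \inf_n f_n(x)/n \le \inf_n \sup_x f_n(x)/n = \beta[(f_n)]$ is the trivial max-min bound. I prove the reverse by contradiction: if $\sup_x \inf_n f_n(x)/n < \beta[(f_n)] - \eps$ for some $\eps > 0$, then the open sets $U_n := \{x : f_n(x) < n(\beta[(f_n)] - \eps)\}$ cover $X$, so finitely many $U_{n_1},\dots,U_{n_K}$ already cover $X$ by compactness; set $n_\star := \max_k n_k$. Given $N$, a greedy stopping-time construction $\tau_0 := 0$, $\tau_{j+1} := \tau_j + n_{k_j}$ (with $n_{k_j}$ chosen so that $T^{\tau_j}x \in U_{n_{k_j}}$), halted at the first $J$ with $\tau_{J+1} > N$, produces via iterated subadditivity $f_{\tau_J}(x) < \tau_J(\beta[(f_n)] - \eps)$; one more application of subadditivity for the tail of length $N - \tau_J < n_\star$ yields $f_N(x) < N(\beta[(f_n)] - \eps) + C$ for a constant $C$ depending only on $f_1,\dots,f_{n_\star}$. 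Taking $\sup_x$ and letting $N \to \infty$ contradicts $\lim_N b_N/N = \beta[(f_n)]$ obtained above. Attainment of the sup in this expression follows because $x \mapsto \inf_n f_n(x)/n$ is upper semicontinuous on the compact space $X$.

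The main obstacle, appearing both in the construction of the maximising invariant measure and in the greedy-cover contradiction, is passing from a bound at a single scale $n$ to uniform control over all scales simultaneously. In each case this is achieved by an identity that trades the global quantity at scale $n$ for a sum or average of local quantities at smaller scales, at the cost of boundary corrections that are absorbed into a constant and washed out by the normalisation.
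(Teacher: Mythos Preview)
The paper does not supply its own proof of this statement; it is quoted verbatim from \cite[Theorem~A.3]{M2013} and used only as background. Consequently there is nothing in the paper to compare your argument against directly.

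That said, your proof is correct. The empirical-measure construction you use to produce a maximising invariant measure is precisely Theorem~\ref{lem5} (Cao--Feng--Huang) specialised to $\nu_n=\delta_{x_n}$, and the paper itself invokes that lemma in its proof of Lemma~\ref{lem1}, which is the $\Lambda$-relative version of the third equality in the display. Your pointwise inequality $\sum_{k=0}^{n-1} f_m(T^k x)\ge m f_n(x)-C_m$ simply unpacks the same mechanism by hand, so on that portion the two routes coincide. The greedy open-cover contradiction for the final identity is the standard argument and is carried out correctly; the appeal to Theorem~\ref{the1.1} for the ergodic supremum and the upper-semicontinuity argument for attainment of $\sup_x\inf_n f_n(x)/n$ are likewise sound.

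One small imprecision: in the tail bound $f_N(x)<N(\beta[(f_n)]-\eps)+C$, the constant $C$ must also absorb a term of size $n_\star\lvert\beta[(f_n)]-\eps\rvert$ arising when you replace $\tau_J$ by $N$, since $\tau_J(\beta[(f_n)]-\eps)\le N(\beta[(f_n)]-\eps)$ can fail when $\beta[(f_n)]-\eps<0$. This additional dependence on $\beta[(f_n)]$ and $\eps$ is harmless for the limit argument, so the conclusion stands.
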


Now, Support that $\Lambda$ is a nonempty and compact subset of $\cM(X,T)$. Similarly, for any $f\in C(X)$, any $A\in C(X,GL_d(\mathbb{R}))$, any subadditive sequence $\{f_n\}_{n=1}^{+\infty}$ of upper semicontimuous functions from $X$ to $\mathbb{R}\cup\{-\infty\}$. We define 
\[
\begin{split}
\beta^\Lambda(f)&:=\sup\limits_{\mu\in\Lambda}\int f\mathrm{d\mu},\\
\cM^\Lambda_{max}(f)&:=\left\{\mu\in\Lambda\mid \int f\mathrm{d\mu}=\beta^\Lambda(f)\right\},\\
\beta^\Lambda(A)&:=\sup_{\mu\in\Lambda}\inf\limits_{n\geq1}\frac{1}{n}\int log\|A(n,x)\|\mathrm{d\mu},\\
\cM^\Lambda_{max}(A)&:=\left\{\mu\in\Lambda\mid \inf\limits_{n\geq1}\frac{1}{n}\int log\|A(n,x)\|\mathrm{d\mu}=\beta^\Lambda(A)\right\},\\
\mathcal{U}^\Lambda&:=\left\{A\in C(X,GL_d(\mathbb{R}))\mid \cM^\Lambda_{max}(A) \text{ is a singleton}\right\},\\
\beta^\Lambda[(f_n)]&:=\sup_{\mu\in\Lambda}\inf\limits_{n\geq1}\frac{1}{n}\int f_n\mathrm{d\mu},\\
\cM^\Lambda_{max}[(f_n)]&:=\left\{\mu\in\Lambda\mid \inf\limits_{n\geq1}\frac{1}{n}\int f_n\mathrm{d\mu}=\beta^\Lambda[(f_n)]\right\}.
\end{split}
\]

From the definitions, if we denote $f_n=\sum\limits_{i=0}^{n-1}f\circ T^i$ and $A=e^fI_d$, then $\beta^\Lambda[(f_n)]=\beta^\Lambda(A)=\beta^\Lambda(f)$ and $\cM^\Lambda_{max}[(f_n)]=\cM^\Lambda_{max}(A)=\cM^\Lambda_{max}(f)$. For any $A\in\mathcal{U}^\Lambda$, suppose that $\cM_{max}(A)=\{\mu^\Lambda_A\}$. When $\Lambda=\cM(X,T)$, we will omit $\Lambda$. Since $\mu\to\inf\limits_{n\geq1}\frac{1}{n}\int log\|A(n,x)\|\mathrm{d\mu}$ is a upper semicontinuous function from $\Lambda$ to $\mathbb{R}$, we have that $\cM^\Lambda_{max}(A)$ is a nonempty and compact subset of $\cM(X,T)$. Similarly, $\cM^\Lambda_{max}(f)$ and $\cM^\Lambda_{max}[(f_n)]$ are nonempty and compact subsets of $\cM(X,T)$. 

As for $\mathcal{U}^\Lambda$, we will prove the following conclusions.
\begin{maintheorem}\label{maintheorem-1}
	Suppose that $(X,T)$ is a dynamical system, $\Lambda$ is a nonempty and compact subset of $\cM(X,T)$. Then
	\begin{enumerate}[(1)]
		\item $\mathcal{U}^\Lambda$ is residual in $C(X,GL_d(\mathbb{R}))$, in particular, $\mathcal{U}$ is residual in $C(X,GL_d(\mathbb{R}))$;
		\item if $\#\Lambda<+\infty$, then $C(X,GL_d(\mathbb{R}))\setminus\mathcal{U}^\Lambda$ is nowhere dense in $C(X,GL_d(\mathbb{R}))$;
		\item if $\overline{\Lambda\cap\cM^e(X,T)}=\Lambda$ and $\mathcal{F}$ is a dense subset of $C(X,GL_d(\mathbb{R}))$, then $\bigcup\limits_{A\in\mathcal{F}}\cM^\Lambda_{max}(A)$ is dense in $\Lambda$.
	\end{enumerate}
\end{maintheorem}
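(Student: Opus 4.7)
The central observation is that since each $\mu \in \Lambda$ is $T$-invariant, the identity $\log \|e^{g(\cdot)} A(n,x)\| = S_n g(x) + \log \|A(n,x)\|$ gives $\lambda(e^g A, \mu) = \lambda(A, \mu) + \int g\, d\mu$. Consequently, for each fixed $A \in C(X, GL_d(\mathbb R))$, the function $P_A \colon C(X) \to \mathbb R$ defined by
\[
P_A(g) := \beta^\Lambda(e^g A) = \sup_{\mu \in \Lambda}\bigl(\lambda(A, \mu) + \int g\, d\mu\bigr)
\]
is a supremum of affine functions of $g$, hence convex, and is $1$-Lipschitz on the separable Banach space $C(X)$. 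A standard computation identifies $\partial P_A(g) = \overline{\mathrm{co}}\bigl(\cM^\Lambda_{\max}(e^g A)\bigr)$, so $P_A$ is G\^ateaux differentiable at $g$ if and only if $\cM^\Lambda_{\max}(e^g A)$ is a singleton, if and only if $e^g A \in \mathcal{U}^\Lambda$.

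\smallskip\noindent\textbf{Plan for (1).} I will apply Mazur's theorem to the continuous convex function $P_A$: this yields a residual $G_\delta$ set $D_A \subseteq C(X)$ of points of G\^ateaux differentiability, equivalently, $\{g : e^g A \in \mathcal{U}^\Lambda\}$ is residual for every $A$. This already gives density of $\mathcal{U}^\Lambda$ since, given $A_0$ and $\epsilon > 0$, one chooses $g \in D_{A_0}$ with $\|g\|_\infty$ small enough that $\rho(e^g A_0, A_0) < \epsilon$. To upgrade density to residuality in $C(X, GL_d(\mathbb R))$, the plan is to invoke the Kuratowski--Ulam theorem on the product $C(X, GL_d(\mathbb R)) \times C(X)$ applied to $P := \{(A,g) : e^g A \in \mathcal{U}^\Lambda\}$, whose vertical fibres are the residual sets $D_A$. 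The required Baire property of $P$ will come from Borelness of $\mathcal{U}^\Lambda$, which I verify via the representation $\mathcal{U}^\Lambda = \bigcap_k \{A : h_k(A) = 0\}$ for a countable dense $\{g_k\} \subset C(X)$ with
\[ h_k(A) := \inf_{t > 0} \frac{P_A(tg_k) + P_A(-tg_k) - 2 P_A(0)}{t} \geq 0, \]
a function that is Borel because $\beta^\Lambda$ is upper semicontinuous (hence Borel) and the inf can be taken over rationals by continuity in $t$. K--U then makes $P$ comeager; since every horizontal fibre $\{A : e^g A \in \mathcal{U}^\Lambda\}$ equals the homeomorphic translate $e^{-g} \cdot \mathcal{U}^\Lambda$, comeagerness of a single such fibre is equivalent to comeagerness of $\mathcal{U}^\Lambda$ itself. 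The case $\Lambda = \cM(X, T)$ then gives the residuality of $\mathcal{U}$.

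\smallskip\noindent\textbf{Plan for (2).} For finite $\Lambda = \{\mu_1, \dots, \mu_m\}$, I will show that the interior of $\mathcal{U}^\Lambda$ is already dense in $C(X, GL_d(\mathbb R))$, whence the complement of $\mathcal{U}^\Lambda$ is contained in the closed nowhere-dense complement of that interior. For each $i$, the function $A \mapsto \lambda(A, \mu_i) = \inf_n \frac{1}{n} \int \log \|A(n,x)\|\, d\mu_i$ is a decreasing pointwise limit of continuous functions, so it is of Baire class 1 on the Polish space $C(X, GL_d(\mathbb R))$; by the Baire--Hausdorff--Osgood theorem its continuity set is residual $G_\delta$. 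Let $\tilde R$ be the finite intersection of these $m$ continuity sets (still residual), and pick any $A$ in the residual set $\tilde R \cap \mathcal{U}^\Lambda$. At such an $A$ there is a unique $i_A$ with $\lambda(A, \mu_{i_A}) > \lambda(A, \mu_j)$ for every $j \neq i_A$, and the joint continuity of every $\lambda(\cdot, \mu_i)$ at $A$ propagates the strict inequalities to a whole $\rho$-neighborhood $W$; hence $W \subset \mathcal{U}^\Lambda$ and $A \in \mathrm{int}(\mathcal{U}^\Lambda)$. This makes $\mathrm{int}(\mathcal{U}^\Lambda)$ dense, which is the desired statement.

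\smallskip\noindent\textbf{Plan for (3) and main obstacle.} The hypothesis $\overline{\Lambda \cap \cM^{e}(X, T)} = \Lambda$ reduces the density claim to approximating every ergodic $\nu \in \Lambda$ by some $\mu \in \bigcup_{A \in \mathcal{F}} \cM^\Lambda_{\max}(A)$. Given a weak-$*$ neighborhood $V$ of $\nu$, the compact set $\Lambda \setminus V$ does not contain the extreme point $\nu$ of $\cM(X, T)$, so a Hahn--Banach-type separation produces $g \in C(X)$ and $c > 0$ with $\int g\, d\nu > \int g\, d\rho + 3c$ for every $\rho \in \Lambda \setminus V$. Setting $A_0 := e^g I_d$ yields $\lambda(A_0, \rho) = \int g\, d\rho$ for every $\rho \in \Lambda$, so $\cM^\Lambda_{\max}(A_0) = \{\nu\} \subset V$. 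By density of $\mathcal{F}$, choose $A \in \mathcal{F}$ with $\rho(A, A_0)$ small enough that $|\lambda(A, \mu) - \lambda(A_0, \mu)|$—controlled uniformly in $\mu$ via the factorization $A = A_0 \cdot (A_0^{-1} A)$ with $A_0^{-1} A$ close to $I_d$—is smaller than $c$; the separation survives and forces $\cM^\Lambda_{\max}(A) \subset V$. The principal obstacle throughout is the mere upper semicontinuity of $\beta^\Lambda$ and $A \mapsto \lambda(A, \mu)$: the argmax correspondence $A \mapsto \cM^\Lambda_{\max}(A)$ can fail upper hemicontinuity, which is what blocks a naive openness proof in (1) and forces the Baire--Hausdorff--Osgood detour in (2); the Mazur--Kuratowski--Ulam route in (1) circumvents this by using only the Borel structure of $\mathcal{U}^\Lambda$.
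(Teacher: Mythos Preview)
Your plan is correct; part~(2) is essentially the paper's argument, but parts~(1) and~(3) take genuinely different routes.

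For~(1), the paper works more elementarily, in the style of Jenkinson's scalar proof. It first shows (Lemma~3.1, via the subadditive lemma of Cao--Feng--Huang) that $\beta^\Lambda(A)=\inf_n\frac1n\sup_{\mu\in\Lambda}\int\log\|A(n,x)\|\,d\mu$, so $\beta^\Lambda$ is Baire class one and its residual continuity set $\tilde{\cC}^\Lambda$ exists by Baire's theorem. It then intersects countably many homeomorphic translates $A\mapsto e^{\eta_k}A$ of $\tilde{\cC}^\Lambda$ to produce a residual set $\cC^\Lambda$ invariant under these maps, and on $\cC^\Lambda$ shows directly that each set $\{A:\diam\{\int\gamma_i\,d\mu:\mu\in\cM^\Lambda_{\max}(A)\}\ge 1/j\}$ is closed and nowhere dense, using an upper-hemicontinuity lemma for maximisers at continuity points of $\beta^\Lambda$ (Lemma~3.3) together with a lemma that the values $\{\int f\,d\mu:\mu\in\cM^\Lambda_{\max}(e^{\varepsilon f}A)\}$ collapse to $\{\beta^\Lambda(f\mid A)\}$ as $\varepsilon\searrow0$ (Lemma~3.4). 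Your Mazur/Kuratowski--Ulam argument packages Lemmas~3.3--3.4 into the single identity $\partial P_A(0)=\overline{\mathrm{co}}\,\cM^\Lambda_{\max}(A)$ and replaces the hand-built invariant residual set by a product-space transfer; it is cleaner and more conceptual, at the cost of quoting two heavier theorems. (Your assertion that $\beta^\Lambda$ is upper semicontinuous is correct---it follows from joint upper semicontinuity of $(A,\mu)\mapsto\lambda(A,\mu)$ and compactness of $\Lambda$---but deserves a line, since a sup of u.s.c.\ functions is not u.s.c.\ in general.)

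For~(3), the paper quotes Jenkinson's theorem that every ergodic $\nu$ is uniquely maximising for some $h\in C(X)$, sets $A_0=e^hI_d$, checks (Lemma~3.5) that such scalar cocycles are continuity points of $\beta^\Lambda$, and then applies Lemma~3.3 to force maximisers of nearby $A_n\in\cF$ toward $\nu$. Your Hahn--Banach separation is equivalent in content---extremality of $\nu$ in $\cM(X,T)$ gives $\nu\notin\overline{\mathrm{co}}(\Lambda\setminus V)$ by Bauer's characterisation of extreme points---and your uniform bound $|\lambda(A,\mu)-\lambda(A_0,\mu)|\le\max(\log\|e^{-g}A\|,\log\|(e^{-g}A)^{-1}\|)$ is exactly the paper's Lemma~3.5. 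One small correction: your separation only guarantees $\cM^\Lambda_{\max}(A_0)\subset V$, not $\cM^\Lambda_{\max}(A_0)=\{\nu\}$ as you wrote (you separated $\nu$ from $\Lambda\setminus V$, not from $\Lambda\setminus\{\nu\}$); fortunately the inclusion is all the rest of your argument needs.
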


The particular case of Theorem \ref{maintheorem-1} (1) is a generalization of \cite[Theorem 3.2]{J2006} from continuous functions to matrix cocycles. As corollaries, we have 
\begin{maincorollary}\label{maincorollary-1}
	Suppose that $(X,T)$ is a dynamical system, if $\mathcal{F}$ is a dense subset of $C(X,GL_d(\mathbb{R}))$, then $\cM^e(X,T)\cap\bigcup\limits_{A\in\mathcal{F}}\cM_{max}(A)$ is dense in $\cM^e(X,T)$.
\end{maincorollary}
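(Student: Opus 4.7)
The natural plan is to apply Theorem~\ref{maintheorem-1}(3) with $\Lambda:=\overline{\cM^e(X,T)}$ and then, via a Choquet-decomposition argument, refine the maximising measure produced by the theorem into an ergodic one. First I would verify the hypotheses of Theorem~\ref{maintheorem-1}(3): $\Lambda$ is a nonempty closed, hence compact, subset of $\cM(X,T)$ (nonempty by Krylov--Bogolyubov), and $\overline{\Lambda\cap\cM^e(X,T)}=\overline{\cM^e(X,T)}=\Lambda$ since $\cM^e(X,T)\subseteq\Lambda$. By Theorem~\ref{the1.2} applied to $f_n=\log\|A(n,\cdot)\|$, one has
\[
\beta(A)=\sup_{\mu\in\cM^e(X,T)}\inf_{n\ge1}\frac{1}{n}\int\log\|A(n,x)\|\,d\mu,
\]
and because $\cM^e(X,T)\subseteq\Lambda\subseteq\cM(X,T)$ this forces $\beta^\Lambda(A)=\beta(A)$ and hence $\cM^\Lambda_{max}(A)=\cM_{max}(A)\cap\Lambda$ for every $A$. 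Applied to a given $\nu\in\cM^e(X,T)$, Theorem~\ref{maintheorem-1}(3) then produces a sequence $A_n\in\mathcal{F}$ and (a priori only $\Lambda$-valued) measures $\mu_n\in\cM_{max}(A_n)\cap\Lambda$ with $\mu_n\to\nu$ in the weak-$*$ topology.

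The main step is to upgrade each $\mu_n$ to a genuinely ergodic element of $\cM_{max}(A_n)$ close to $\nu$. The compact convex metrizable set $\cM_{max}(A_n)$ has extreme points equal to $\cM_{max}(A_n)\cap\cM^e(X,T)$ by Theorem~\ref{the1.1}, so Choquet's theorem provides a Borel probability measure $\tau_n$ supported on $\cM_{max}(A_n)\cap\cM^e(X,T)$ whose barycenter is $\mu_n$. The decisive observation is now that $\cM(X,T)$ is itself a Choquet simplex whose extreme points are exactly the ergodic measures; for the extreme point $\nu$, the only probability measure on $\cM(X,T)$ with barycenter $\nu$ is $\delta_\nu$. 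Since the barycenter map $\cP(\cM(X,T))\to\cM(X,T)$ is weak-$*$ continuous and $\mu_n\to\nu$, every weak-$*$ accumulation point of $(\tau_n)$ has barycenter $\nu$ and must therefore equal $\delta_\nu$, giving $\tau_n\to\delta_\nu$ weak-$*$.

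To conclude, fix any open neighbourhood $U$ of $\nu$ in $\cM(X,T)$. The portmanteau theorem yields $\liminf_n\tau_n(U)\ge\delta_\nu(U)=1$, so $\tau_n(U)>0$ for all sufficiently large $n$; because $\tau_n$ is concentrated on $\cM_{max}(A_n)\cap\cM^e(X,T)$, this forces $U\cap\cM_{max}(A_n)\cap\cM^e(X,T)\neq\emptyset$, producing an ergodic maximising measure of some $A_n\in\mathcal{F}$ inside $U$ and proving density. The main obstacle is the middle step: passing from ``some element of $\cM_{max}(A_n)\cap\Lambda$ is close to $\nu$'' to ``some ergodic element of $\cM_{max}(A_n)$ is close to $\nu$'' cannot be handled by upper semicontinuity of $A\mapsto\cM_{max}(A)$ (which can fail for subadditive cocycles), and so requires exploiting the Choquet-simplex rigidity of $\cM(X,T)$ at its extreme points together with Theorem~\ref{the1.1}.
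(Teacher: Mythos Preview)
Your argument is correct, but it takes a genuinely different and more elaborate route than the paper's.

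The paper does \emph{not} invoke Theorem~\ref{maintheorem-1}(3) as a black box. Instead it essentially reruns its proof while keeping track of one extra piece of information: given $\nu\in\cM^e(X,T)$, Jenkinson's Theorem~\ref{lem8} produces $h\in C(X)$ with $\cM_{max}(h)=\{\nu\}$, so $A:=e^hI_d$ satisfies $\cM_{max}(A)=\{\nu\}$, a \emph{singleton}. By Lemma~\ref{lem7} one has $A\in\tilde{\mathcal C}$, so for any sequence $A_n\in\mathcal F$ with $A_n\to A$ the hypotheses of Lemma~\ref{lem3} hold, and every accumulation point of any choice $\mu_n\in\cM_{max}(A_n)$ lies in $\cM_{max}(A)=\{\nu\}$; hence $\mu_n\to\nu$ automatically. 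Now simply pick each $\mu_n$ to be an ergodic extreme point of $\cM_{max}(A_n)$, which exists by Theorem~\ref{the1.1}, and the conclusion follows in one line.

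By treating Theorem~\ref{maintheorem-1}(3) as a black box you lose exactly this singleton information: the theorem only hands you \emph{some} $\mu_n\in\cM_{max}(A_n)\cap\Lambda$ close to $\nu$, with no control over the rest of $\cM_{max}(A_n)$. Your Choquet-simplex rigidity argument (barycenter continuity plus the fact that $\delta_\nu$ is the unique representing measure of an extreme point) is a valid and rather elegant way to recover the ergodic upgrade, and it showcases a technique of independent interest. But it is substantially heavier than the paper's approach, which avoids Choquet theory entirely by exploiting that the approximating cocycle $A$ already lies in $\mathcal U$. In short: your route is a correct black-box deduction from Theorem~\ref{maintheorem-1}(3); the paper's route reopens the proof of that theorem to extract the stronger uniqueness statement, making the passage to ergodic measures immediate.
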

\begin{maincorollary}\label{maincorollary-2}
	Suppose that $(X,T)$ is a dynamical system with $\#\cM^e(X,T)<+\infty$, then $C(X,GL_d(\mathbb{R}))\setminus\mathcal{U}$ is nowhere dense in $C(X,GL_d(\mathbb{R}))$.
\end{maincorollary}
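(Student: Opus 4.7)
\bigskip

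\noindent \textbf{Proof proposal for Corollary \ref{maincorollary-2}.}

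The plan is to reduce the statement to Theorem~\ref{maintheorem-1}(2) by choosing the finite set $\Lambda:=\cM^e(X,T)$. Since $\#\cM^e(X,T)<+\infty$ by hypothesis, $\Lambda$ is a finite, hence compact, nonempty (by the Krein--Milman theorem) subset of $\cM(X,T)$. Applying Theorem~\ref{maintheorem-1}(2) with this $\Lambda$ immediately gives that $C(X,GL_d(\mathbb{R}))\setminus\mathcal{U}^{\cM^e(X,T)}$ is nowhere dense in $C(X,GL_d(\mathbb{R}))$. The entire task therefore reduces to proving the identity
\[
\mathcal{U}=\mathcal{U}^{\cM^e(X,T)}.
\]

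For any $A\in C(X,GL_d(\mathbb{R}))$, the sequence $\{\log\|A(n,\cdot)\|\}_{n=1}^{+\infty}$ is a subadditive sequence in $C(X)$, so Theorem~\ref{the1.2} applies and yields
\[
\beta(A)=\sup_{\mu\in\cM^e(X,T)}\inf_{n\geq 1}\frac{1}{n}\int\log\|A(n,x)\|\,\mathrm{d}\mu=\beta^{\cM^e(X,T)}(A).
\]
Consequently $\cM^{\cM^e(X,T)}_{\max}(A)=\cM_{\max}(A)\cap\cM^e(X,T)$, and by Theorem~\ref{the1.1} this intersection is exactly the set of extreme points of $\cM_{\max}(A)$.

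Now I would close the loop in both directions. If $A\in\mathcal{U}$, then $\cM_{\max}(A)=\{\mu\}$ is a singleton; its unique extreme point is $\mu$, which by Theorem~\ref{the1.1} is ergodic, so $\cM^{\cM^e(X,T)}_{\max}(A)=\{\mu\}$ and $A\in\mathcal{U}^{\cM^e(X,T)}$. Conversely, if $A\in\mathcal{U}^{\cM^e(X,T)}$, then the set of extreme points of the compact convex set $\cM_{\max}(A)$ is the singleton $\cM_{\max}(A)\cap\cM^e(X,T)=\{\mu\}$; the Krein--Milman theorem then forces $\cM_{\max}(A)=\{\mu\}$, hence $A\in\mathcal{U}$. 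Combining the two inclusions with the nowhere-density established from Theorem~\ref{maintheorem-1}(2) completes the proof.

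The argument is essentially structural and poses no serious analytic obstacle; the only delicate point is ensuring that Theorem~\ref{the1.2} legitimately equates $\beta(A)$ with $\beta^{\cM^e(X,T)}(A)$ (which needs the continuity of $\log\|A(n,\cdot)\|$, guaranteed by $A(x)\in GL_d(\mathbb{R})$ for every $x$), and then exploiting Krein--Milman via Theorem~\ref{the1.1} to upgrade singleton-ness on the ergodic part to singleton-ness of the whole maximising set.
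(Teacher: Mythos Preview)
Your proof is correct and follows essentially the same route as the paper: set $\Lambda=\cM^e(X,T)$, apply Theorem~\ref{maintheorem-1}(2), and conclude via the identity $\mathcal{U}=\mathcal{U}^{\cM^e(X,T)}$. The paper simply asserts this identity, whereas you supply a full justification via Theorems~\ref{the1.1}, \ref{the1.2} and Krein--Milman; your added detail is accurate and arguably clarifies a point the paper leaves implicit.
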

Here is an example such that $\#\cM^e(X,T)=+\infty$ and $C(X,GL_d(\mathbb{R}))\setminus\mathcal{U}$ is dense in $C(X,GL_d(\mathbb{R}))$ for some $d\geq1$.
\begin{example}
	Suppose that $X=[0,1]$ and $Tx=x$ for any $x\in[0,1]$. Then $\delta_x\in\cM^e(X,T)$ for any $x\in[0,1]$. Given $f\in C(X,\mathbb{R}\setminus\{0\})$. For any $n\geq1$, let 
	\[g_n(x):=
	\begin{cases}
		\frac{2^n-1}{2^n}\|f\|&\text{if } x\in f^{-1}([\frac{2^n-1}{2^n}\|f\|,\|f\|]),\\
		-\frac{2^n-1}{2^n}\|f\|&\text{if } x\in f^{-1}([-\|f\|,-\frac{2^n-1}{2^n}\|f\|]),\\
		f(x)&\text{otherwise}.
	\end{cases}
	\]
	Then $\|g_n-f\|\leq\frac{1}{2^n}\|f\|$ and $\#\cM_{max}(g_n)>1$ for any $n\geq1$. Hence, $C(X,\mathbb{R}\setminus\{0\})\setminus\mathcal{U}$ is dense in $C(X,\mathbb{R}\setminus\{0\})$.
\end{example}

For any $A\in C(X,GL_d(\mathbb{R}))$, we denote $$LI_A:=\left\{x\in X\mid \lim_{n\to+\infty}\frac{1}{n}log\|A(n,x)\|\text{ diverges }\right\},$$ the set of \emph{Lyapunov-irregular} points of $A$. Denote $$\mathcal{R}:=\left\{A\in C(X,GL_d(\mathbb{R}))\mid LI_A\text{ is residual in } X\right\}.$$ 

As for a minimal dynamical system, we have
\begin{maincorollary}\label{maincorollary-3}
	Suppose that $(X,T)$ is a minimal dynamical system with $\#\cM^e(X,T)>1$, then $\mathcal{R}$ is residual in $C(X,GL_d(\mathbb{R}))$. Moreover, if $1<\#\cM^e(X,T)<+\infty$, then $C(X,GL_d(\mathbb{R}))\setminus\mathcal{R}$ is nowhere dense in $C(X,GL_d(\mathbb{R}))$.
\end{maincorollary}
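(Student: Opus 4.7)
The plan is to establish the inclusion $\cU\subset\cR$ under the standing hypotheses; combined with Theorem~\ref{maintheorem-1}(1) this gives the residuality of $\cR$, and combined with Theorem~\ref{maintheorem-1}(2) it yields the nowhere-dense statement. Fix $A\in\cU$, write $\cM_{max}(A)=\{\mu_A\}$, $\beta:=\beta(A)$ and $\phi_n(x):=\frac1n\log\|A(n,x)\|$. By Theorem~\ref{the1.1} the unique maximiser $\mu_A$ is ergodic, and the hypothesis $\#\cM^e(X,T)>1$ furnishes some ergodic $\nu\neq\mu_A$; since $\nu\notin\cM_{max}(A)$ its Lyapunov exponent $\chi(\nu):=\lim_n\frac1n\int\log\|A(n,x)\|\,d\nu$ is strictly less than $\beta$. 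The goal is to exhibit a residual set of $x\in X$ on which $\limsup_n\phi_n(x)\geq\beta>\chi(\nu)\geq\liminf_n\phi_n(x)$.

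For each $k,N\in\mathbb{N^+}$ set
\[
U_{k,N}:=\{x:\phi_n(x)>\beta-\tfrac1k\ \text{for some}\ n\geq N\},\quad V_{k,N}:=\{x:\phi_n(x)<\chi(\nu)+\tfrac1k\ \text{for some}\ n\geq N\},
\]
both open in $X$. By Theorem~\ref{the1.2} applied to $\{\log\|A(n,\cdot)\|\}$, the equality $\beta=\sup_x\inf_n\phi_n(x)$ is attained at some $x_0\in X$, so $\phi_n(x_0)\geq\beta$ for every $n$. Submultiplicativity $\|A(n+j,x_0)\|\leq\|A(n,T^jx_0)\|\cdot\|A(j,x_0)\|$ then gives $\phi_n(T^jx_0)\geq\frac{n+j}{n}\beta-\frac1n\log\|A(j,x_0)\|$, which exceeds $\beta-1/k$ once $n$ is large enough (depending on $j,k$); hence $T^jx_0\in U_{k,N}$ for every $j\geq0$, and minimality of $(X,T)$ forces $U_{k,N}$ to be dense. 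Symmetrically, for a $\nu$-generic $y$ one has $\phi_n(y)\to\chi(\nu)$; the factorisation $A(n,T^jy)=A(n+j,y)A(j,y)^{-1}$ gives $\phi_n(T^jy)\leq\frac{n+j}{n}\phi_{n+j}(y)+\frac1n\log\|A(j,y)^{-1}\|$, whose $\limsup$ is at most $\chi(\nu)$, so $T^jy\in V_{k,N}$ for every $j$ and $V_{k,N}$ is dense. The intersection $\bigcap_{k,N\geq1}(U_{k,N}\cap V_{k,N})$ is then a residual subset of $LI_A$, so $A\in\cR$.

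For the moreover clause, take $\Lambda:=\cM^e(X,T)$, now finite and thus compact, and apply Theorem~\ref{maintheorem-1}(2) to deduce that $C(X,GL_d(\mathbb{R}))\setminus\cU^\Lambda$ is nowhere dense. The ergodic decomposition shows $\cU^\Lambda\subset\cU$: any $\mu\in\cM_{max}(A)$ decomposes into ergodic components, each of which must itself maximise and so coincide with the unique ergodic maximiser in $\cM^\Lambda_{max}(A)$. Combined with $\cU\subset\cR$ from the first part, this places $C(X,GL_d(\mathbb{R}))\setminus\cR$ inside the nowhere-dense set $C(X,GL_d(\mathbb{R}))\setminus\cU^\Lambda$, finishing the argument.

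The main technical obstacle I anticipate is step~2: transferring a ``single good point'' statement to density of $U_{k,N}$ and $V_{k,N}$ through forward iterates, without a Birkhoff-sum structure. One uses the attained extremiser supplied by Theorem~\ref{the1.2} on one side, a $\nu$-generic point on the other, and controls the error terms $\frac1n\log\|A(j,\cdot)^{\pm1}\|$ via the uniform bounds on $\|A\|$ and $\|A^{-1}\|$ coming from continuity of $A$ on the compact space $X$.
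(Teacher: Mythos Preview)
Your argument is correct and reaches the same conclusion $\cU\subset\cR$ (hence $\cR$ residual via Theorem~\ref{maintheorem-1}(1)), but by a genuinely different route from the paper's. The paper simply invokes the dichotomy of Theorem~\ref{the2.3} (from \cite{HLT2021}): for minimal $(X,T)$ and any $A$, either $\tfrac1n\log\|A(n,x)\|$ converges to a constant independent of $x$, or $A\in\cR$. When $A\in\cU$ and $\#\cM^e(X,T)>1$, the first alternative would force every invariant measure to have the same top exponent, contradicting uniqueness of the maximiser; hence $A\in\cR$. You instead prove $\cU\subset\cR$ from scratch, using the attained supremum from Theorem~\ref{the1.2} on one side, a $\nu$-generic point on the other, and the submultiplicativity estimates to propagate along forward orbits; this is essentially a self-contained proof of the special case of Theorem~\ref{the2.3} that is actually needed, trading a one-line citation for a direct elementary argument. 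For the ``moreover'' clause, your passage through $\Lambda=\cM^e(X,T)$, Theorem~\ref{maintheorem-1}(2), and the ergodic-decomposition inclusion $\cU^\Lambda\subset\cU$ is exactly the content of the paper's Corollary~\ref{maincorollary-2}, so there the two proofs coincide.
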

The rest of this paper is organized as follows. In section \ref{sec2}, we will introduce some preliminary results that will be used in the proof. In section \ref{sec3}, we will prove Theorem \ref{maintheorem-1}. In section \ref{sec4}, we will prove Corollary \ref{maincorollary-1}, \ref{maincorollary-2} and \ref{maincorollary-3}.

\section{Preliminaries}\label{sec2}
In this section, we will introduce some preliminary results that will be used in the proof.
\subsection{Subadditive sequence}
As for a subadditive sequence of upper semicontimuous functions from $X$ to $\mathbb{R}\cup\{-\infty\}$, it's be proved in \cite{CFH2008} that
\begin{theorem}\cite[Lemma 2.3]{CFH2008}\label{lem5}
	Suppose that $\{\nu_n\}_{n=1}^{+\infty}$ is a sequence in $\cM(X)$ and $\{f_n\}_{n=1}^{+\infty}$ is a subadditive sequence of upper semicontimuous functions from $X$ to $\mathbb{R}\cup\{-\infty\}$. We form the new sequence $\{\mu_{n}\}_{n=1}^{+\infty}$ by $\mu_{n}=\frac{1}{n}\sum\limits_{i=0}^{n-1}\nu_n\circ T^{-i}$. Assume that $\mu_{n_i}$ converges to $\mu$ in $\cM(X)$ for some subsequence $\{n_i\}$ of natural numbers. Then $\mu\in\cM(X,T)$, and moreover $$ \limsup_{i\to+\infty}\frac{1}{n_i}\int f_{n_i}\mathrm{d}\nu_{n_i}\leq\lim\limits_{n\to+\infty}\frac{1}{n}\int f_n\mathrm{d\mu}.$$
\end{theorem}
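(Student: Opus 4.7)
The plan is to establish (i) that the weak-$*$ limit $\mu$ is $T$-invariant, and (ii) the asserted inequality, via a block-decomposition argument that transfers the subadditivity of $\{f_n\}$ to integrals. For (i), I would observe that $\mu_n - \mu_n \circ T^{-1} = \tfrac{1}{n}(\nu_n - \nu_n \circ T^{-n})$, whose total-variation norm is at most $2/n$; passing to the weak-$*$ limit along $\{n_i\}$ then forces $\mu = \mu \circ T^{-1}$.

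For the inequality, fix $k \geq 1$ and $n > k$. For each residue $j \in \{0, 1, \ldots, k-1\}$, write $n - j = m_j k + r_j$ with $0 \leq r_j < k$ and iterate subadditivity to obtain (under the convention $f_0 \equiv 0$)
\[
f_n(x) \leq f_j(x) + \sum_{i=0}^{m_j - 1} f_k\bigl(T^{j + ik} x\bigr) + f_{r_j}\bigl(T^{j + m_j k}x\bigr).
\]
A short combinatorial check shows that, upon summing over $j$, every translate $f_k \circ T^l$ with $0 \leq l \leq n-k$ appears exactly once in the double sum, yielding
\[
k f_n(x) \leq R_{n,k}(x) + \sum_{l=0}^{n-k} f_k(T^l x),
\]
where $R_{n,k}$ is a sum of at most $2k$ functions drawn from $\{f_j : 0 \leq j < k\}$. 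Since each $f_j$ is upper semicontinuous on the compact space $X$, hence bounded above, there is a constant $C_k$, independent of $n$, with $R_{n,k} \leq C_k$ pointwise. Integrating against $\nu_n$ and dividing by $n$ produces
\[
\frac{k}{n}\int f_n\,d\nu_n \leq \frac{C_k}{n} + \int f_k\,d\tilde\mu_n,
\qquad
\tilde\mu_n := \frac{1}{n}\sum_{l=0}^{n-k}\nu_n \circ T^{-l}.
\]

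Since $\tilde\mu_n$ is a non-negative measure with $\|\mu_n - \tilde\mu_n\|_{\mathrm{TV}} = (k-1)/n$, we have $\tilde\mu_{n_i} \to \mu$ weak-$*$. Because $f_k$ is upper semicontinuous and bounded above on $X$, the functional $\nu \mapsto \int f_k\,d\nu$ is upper semicontinuous under weak-$*$ convergence of non-negative measures, giving $\limsup_i \int f_k\,d\tilde\mu_{n_i} \leq \int f_k\,d\mu$. Combining,
\[
k \limsup_{i \to \infty} \tfrac{1}{n_i}\int f_{n_i}\,d\nu_{n_i} \leq \int f_k\,d\mu.
\]
Dividing by $k$ and taking the infimum over $k \geq 1$ finishes the proof, once one notes that $n \mapsto \int f_n\,d\mu$ is subadditive for the $T$-invariant measure $\mu$, so Fekete's lemma identifies $\inf_k \frac{1}{k}\int f_k\,d\mu$ with $\lim_{n \to \infty}\frac{1}{n}\int f_n\,d\mu$. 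The main obstacle is the possibility that $f_k$ takes the value $-\infty$: upper semicontinuity together with compactness of $X$ provide the one-sided bounds needed to control $R_{n,k}$ and execute the weak-$*$ step, while allowing integrals on the extended real line makes the conclusion trivial whenever $\int f_k\,d\mu = -\infty$.
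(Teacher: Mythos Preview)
The paper does not supply its own proof of this statement; it is quoted verbatim as \cite[Lemma~2.3]{CFH2008} in the preliminaries and used as a black box. There is therefore nothing in the present paper to compare your argument against.

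That said, your proposal is correct and is essentially the standard proof one finds in the cited source. The invariance argument via the telescoping identity $\mu_n-\mu_n\circ T^{-1}=\tfrac{1}{n}(\nu_n-\nu_n\circ T^{-n})$ is routine. Your block decomposition is sound: the combinatorial claim that $\sum_{j=0}^{k-1}\sum_{i=0}^{m_j-1}f_k(T^{j+ik}x)=\sum_{l=0}^{n-k}f_k(T^lx)$ holds exactly, since for each residue $j$ the indices $j,j+k,\dots,j+(m_j-1)k$ enumerate precisely $\{l:0\le l\le n-k,\ l\equiv j\pmod k\}$ (the total count $\sum_j m_j=n-k+1$ confirms this). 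The remainder $R_{n,k}$ is a sum of $2k$ terms drawn from $\{f_0,\dots,f_{k-1}\}$, each bounded above by upper semicontinuity on the compact $X$, so the uniform bound $C_k$ is legitimate. The passage from $\tilde\mu_n$ to $\mu_n$ and the upper semicontinuity of $\nu\mapsto\int f_k\,d\nu$ on nonnegative measures are handled correctly, as is the final appeal to Fekete's lemma. The remark on the $-\infty$ case is apt. In short: nothing to fix, and your write-up would serve as a self-contained proof were one wanted.
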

\subsection{Ergodic optimisation of Birkhoff averages}
In the research of ergodic optimisation of Birkhoff averages, it's be proved in \cite{J2006ii} that
\begin{theorem}\cite[Theorem 1]{J2006ii}\label{lem8}
	For any $\mu\in\cM^e(X,T)$, there exists $h\in C(X)$ such that $\cM_{max}(h)=\{\mu\}$.
\end{theorem}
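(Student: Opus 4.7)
The target is to produce $h \in C(X)$ with $\cM_{max}(h) = \{\mu\}$. My approach combines Hahn--Banach separation in the dual pairing $(C(X),\cM(X))$ with a carefully weighted series.

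Since $C(X)$ is separable, the weak-$*$ topology on the compact convex set $\cM(X,T)$ is metrizable; fix a compatible metric $D$, and for each $n \geq 1$ set $K_n := \{\nu \in \cM(X,T) : D(\nu,\mu) \geq 1/n\}$. Each $K_n$ is weak-$*$ compact, does not contain $\mu$, and $\cM(X,T) \setminus \{\mu\} = \bigcup_n K_n$. Applying Hahn--Banach separation to the point $\mu$ and each compact convex hull $\overline{\mathrm{conv}}(K_n)$ yields $\phi_n \in C(X)$ with $\|\phi_n\|_\infty \leq 1$ and a positive gap $\gamma_n$ satisfying
\[
\int \phi_n\, d\mu \;\geq\; \int \phi_n\, d\nu + \gamma_n \qquad \forall\, \nu \in K_n.
\]
Normalising so that $\int \phi_n\, d\mu = 0$ (and hence $\|\phi_n\|_\infty \leq 2$), I would then assemble $h := \sum_n \lambda_n \phi_n$ with positive summable weights chosen by a diagonal recursion so that $\sum_n \lambda_n < \infty$ and, for every $n$, the quantity $\lambda_n \gamma_n$ strictly dominates $2\sum_{k\neq n}\lambda_k$.

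Under such a choice of weights, continuity of $h$ is automatic from uniform convergence, and for any $\nu \neq \mu$ one picks the smallest $n$ with $\nu \in K_n$ and uses
\[
\int h\, d\mu - \int h\, d\nu \;=\; -\sum_k \lambda_k \int \phi_k\, d\nu \;\geq\; \lambda_n \gamma_n \;-\; 2\sum_{k\neq n}\lambda_k \;>\; 0,
\]
so $\cM_{max}(h) = \{\mu\}$. The first inequality combines the separation bound at index $n$ with the trivial bound $|\int \phi_k\, d\nu| \leq 2$ for $k \neq n$, and the second is the domination property demanded of the weights.

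The main obstacle is exactly the weight-selection step: the gaps $\gamma_n$ can decay to zero arbitrarily rapidly whenever $\mu$ is a weak-$*$ limit point of $\cM(X,T)$, which makes the twin constraints $\sum_n \lambda_n < \infty$ and $\lambda_n \gamma_n > 2\sum_{k\neq n}\lambda_k$ hard to arrange simultaneously for a naive enumeration. Overcoming it requires refining the construction---either by extracting a lacunary subsequence of $\{(\phi_n,\gamma_n,K_n)\}$ along which the gaps remain controlled, or by sharpening the separating functions $\phi_n$ themselves, for instance using a $\mu$-generic point obtained by applying Birkhoff's ergodic theorem to a countable dense subset of $C(X)$ so that the separations can be built with explicit, predictable decay rates of $\gamma_n$. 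This delicate balancing act between summability and domination is the technical heart of the argument.
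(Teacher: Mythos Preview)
A framing note first: the paper does not itself prove this statement---it is quoted from~\cite{J2006ii}, and the Remark immediately following it points to an argument combining \cite[Lemma~2(ii), Proposition~1]{J2006ii} with \cite[Proposition~5]{Phe1}, i.e.\ an approach through tangent functionals and the Choquet--simplex structure of $\cM(X,T)$ rather than raw Hahn--Banach separation.

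Your separation step is fine (ergodicity gives extremality, and Milman's converse to Krein--Milman yields $\mu\notin\overline{\mathrm{conv}}(K_n)$), but the weight-selection step is not merely delicate---it is impossible as written. With your normalisation $\|\phi_n\|_\infty\le 2$ and $\int\phi_n\,d\mu=0$ one has $\gamma_n\le 2$, so the demand $\lambda_n\gamma_n>2\sum_{k\ne n}\lambda_k$ forces $\lambda_n(\gamma_n+2)>2S$ and hence $\lambda_n>S/2$ for every $n$, where $S=\sum_k\lambda_k$; already two indices give $\lambda_1+\lambda_2>S$, a contradiction. No diagonal recursion or lacunary thinning can rescue this, because the obstruction is purely arithmetic and independent of how fast the $\gamma_n$ decay. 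There is also a structural reason the scheme cannot work: your outline uses ergodicity only to obtain extremality, and an argument at that level of generality would show that every extreme point of any metrizable compact convex set is exposed by a continuous linear functional---which is false (the endpoints of the flat sides of a planar ``stadium'' are extreme but not exposed). What makes Jenkinson's theorem true is extra structure: $\cM(X,T)$ is a Choquet simplex and ergodic measures admit generic points. Your last sentence correctly gestures toward the generic-point idea; to convert it into a proof you must actually use that input (or, equivalently, the simplex/ergodic-decomposition structure) to manufacture separators whose sign is controlled on all of $\cM(X,T)$, not only on the shells $K_n$.
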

\begin{remark}
	Since $\mu\mapsto\inf\limits_{n\geq1}\frac{1}{n}\int log\|A(n,x)\|\mathrm{d\mu}=\lim\limits_{n\to+\infty}\frac{1}{n}\int log\|A(n,x)\|\mathrm{d\mu}$ is an affine upper semicontinuous function on $\cM(X,T)$. If we combine \cite[Lemma 2(ii)]{J2006ii}, \cite[Proposition 1]{J2006ii} and \cite[Proposition 5]{Phe1}, then we can obtain a more general result: for any $A\in C(X,GL_d(\mathbb{R}))$ and any $\mu\in\cM^e(X,T)$, there is $f\in C(X)$ such that $\cM_{max}(e^fA)=\{\mu\}$ and $\beta(e^fA)=0$.
\end{remark}
\subsection{Lyapunov-irregular set} 
As for a minimal dynamical system, it's be proved in \cite{HLT2021} that
\begin{theorem}\cite[Corollary 1.2]{HLT2021}\label{the2.3}
	Suppose that $(X,T)$ is a minimal dynamical system. Given $A\in C(X,GL_d(\mathbb{R}))$, then either there is $c\in\mathbb{R}$ such that $\lim\limits_{n\to+\infty}\frac{1}{n}log\|A(n,x)\|=c$ for any $x\in X$, or $A\in\mathcal{R}$. 
\end{theorem}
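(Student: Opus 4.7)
I write $\phi_n(x):=\frac{1}{n}\log\|A(n,x)\|$, which is continuous and uniformly bounded on $X$ by $C:=\max\{\log\max_{x\in X}\|A(x)\|,\log\max_{x\in X}\|A(x)^{-1}\|\}$, together with $\phi^+(x):=\limsup_n\phi_n(x)$, $\phi^-(x):=\liminf_n\phi_n(x)$, and
\[
M:=\sup_{x\in X}\phi^+(x),\qquad m:=\inf_{x\in X}\phi^-(x),
\]
both finite. The identity $A(n,Tx)=A(n+1,x)A(x)^{-1}$ gives $\phi_n(Tx)=\phi_{n+1}(x)+O(1/n)$, so $\phi^\pm$ are $T$-invariant and $\phi^\pm(T^jx)=\phi^\pm(x)$ for every $j\ge 0$. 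The plan is then a clean dichotomy based on whether $M=m$ or $M>m$.

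\textbf{Core residuality lemma.} I claim that $R^+:=\{x:\phi^+(x)=M\}$ and $R^-:=\{x:\phi^-(x)=m\}$ are both residual. For $R^+$, I set $V_{j,k}:=\bigcup_{n\ge k}\{x:\phi_n(x)>M-1/j\}$; each $V_{j,k}$ is open by continuity of $\phi_n$, and $\bigcap_{j,k}V_{j,k}=\{\phi^+\ge M\}=R^+$ since $\phi^+\le M$ everywhere. To prove density of $V_{j,k}$, fix a nonempty open $U\subseteq X$ and use minimality plus compactness to obtain $N$ with $X=\bigcup_{i=0}^N T^{-i}(U)$. Pick $x_0$ with $\phi^+(x_0)>M-\frac{1}{3j}$; by the infinitely-many-times flavour of $\limsup$, arbitrarily large $n$ satisfy $\phi_n(x_0)>M-\frac{1}{2j}$. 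For such an $n$, choose $i\in\{0,\dots,N\}$ with $T^ix_0\in U$; the cocycle factorisation $A(n,x_0)=A(n-i,T^ix_0)A(i,x_0)$ gives
\[
\phi_{n-i}(T^ix_0)\ge\frac{n}{n-i}\phi_n(x_0)-\frac{\log\|A(i,x_0)\|}{n-i}\ge\phi_n(x_0)-\frac{2NC}{n-i},
\]
where the last inequality uses $|\phi_n|\le C$ and $i\le N$. Taking $n-i\ge k$ and $n-i\ge 4NCj$ makes the right side exceed $M-\frac{1}{j}$, producing a point of $U\cap V_{j,k}$. An entirely parallel argument using $\|A(n-i,T^ix_0)\|\le\|A(n,x_0)\|\cdot\|A(i,x_0)^{-1}\|$ and a base point with $\phi^-$ near $m$ shows that $U_{j,k}:=\bigcup_{n\ge k}\{\phi_n<m+1/j\}$ is open and dense and intersects to $R^-$.

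\textbf{Dichotomy and main obstacle.} If $M=m$, then for every $x\in X$ one has $\phi^+(x)\le M=m\le\phi^-(x)\le\phi^+(x)$, forcing $\phi^+(x)=\phi^-(x)=M$; hence $\lim_n\phi_n(x)=M$ for every $x$, which is the first alternative with $c:=M$. If $M>m$, then $R^+\cap R^-$ is residual in the compact Baire space $X$, and on this set $\phi^+=M>m=\phi^-$, so $R^+\cap R^-\subseteq LI_A$ and $A\in\mathcal{R}$, the second alternative. The delicate step is the density of $V_{j,k}$ and $U_{j,k}$: one cannot assume that $M$ or $m$ is attained, so one relies on the infinitely-many-times characterisation of $\limsup$ and $\liminf$, and one must push $n$ past both the recurrence time $N$ supplied by minimality and the threshold where the $O(1/n)$ cocycle correction is absorbed—both of which use compactness of $X$ and continuity of $A$, $A^{-1}$ in an essential way.
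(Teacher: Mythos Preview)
Your argument is correct. The paper does not prove this statement at all: it is quoted verbatim as \cite[Corollary 1.2]{HLT2021} in the preliminaries section and used as a black box in the proof of Corollary~\ref{maincorollary-3}. So there is no in-paper proof to compare against; you have supplied a self-contained one.

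A few minor points of presentation, none of which affect validity. First, in the density argument the index $i$ with $T^ix_0\in U$ depends only on $x_0$ (via the finite cover $X=\bigcup_{i=0}^N T^{-i}U$), not on $n$; your phrasing ``for such an $n$, choose $i$'' slightly obscures this but the estimate goes through unchanged. Second, the factorisation $A(n,x_0)=A(n-i,T^ix_0)A(i,x_0)$ implicitly uses the convention $A(0,x)=I_d$ when $i=0$, which the paper does not state explicitly but is harmless. Third, in the lower bound $\frac{n}{n-i}\phi_n(x_0)\ge\phi_n(x_0)-\frac{NC}{n-i}$ you are tacitly using $\frac{n}{n-i}\phi_n=\phi_n+\frac{i}{n-i}\phi_n$ together with $|\phi_n|\le C$; this is fine but worth spelling out since $\phi_n$ need not be nonnegative.

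Your approach---writing $R^\pm$ as explicit $G_\delta$ sets and proving density of each $V_{j,k}$, $U_{j,k}$ by transporting a near-extremal orbit segment into an arbitrary open set via the finite cover given by minimality---is the standard direct route for results of this type and is likely close in spirit to the proof in \cite{HLT2021}. It has the advantage here of making the paper self-contained on this point.
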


\section{Proof of Theorem \ref{maintheorem-1}}\label{sec3}
In general, the proof of Theorem \ref{maintheorem-1} (1) is inspired by the proof of \cite[Theorem 3.2]{J2006}. In which, Oliver Jenkinson proved an analogous result for $\cM_{max}(f)$ with $f\in C(X)$. The proof of Theorem \ref{maintheorem-1} (2) is based on Theorem \ref{maintheorem-1} (1). The key to the proof of Theorem \ref{maintheorem-1} (3) is Theorem \ref{lem8} \cite[Theorem 1]{J2006ii}.

\subsection{Proof of Theorem \ref{maintheorem-1} (1)}
\begin{lemma}\label{lem1}
	Support that $\Lambda$ is a nonempty and compact subset of $\cM(X,T)$ and $\{f_n\}_{n=1}^{+\infty}$ is a subadditive sequence of upper semicontimuous functions from $X$ to $\mathbb{R}\cup\{-\infty\}$. Then 
		$$\beta^\Lambda[(f_n)]=\inf\limits_{n\geq1}\frac{1}{n}\sup_{\mu\in\Lambda}\int f_n\mathrm{d\mu}
		=\lim_{n\to+\infty}\frac{1}{n}\sup_{\mu\in\Lambda}\int f_n\mathrm{d\mu}.$$
\end{lemma}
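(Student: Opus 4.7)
The plan is to set $a_n := \sup_{\mu\in\Lambda}\int f_n\,d\mu$ and show that the sequence $(a_n)_{n\geq 1}$ is subadditive, so that Fekete's lemma gives $\inf_{n\geq 1} a_n/n = \lim_{n\to+\infty} a_n/n$. Subadditivity comes from the pointwise bound $f_{n+m}(x) \leq f_n(T^m x) + f_m(x)$: for any $T$-invariant $\mu \in \Lambda$, integrating and using $\int f_n \circ T^m\,d\mu = \int f_n\,d\mu$ yields $\int f_{n+m}\,d\mu \leq \int f_n\,d\mu + \int f_m\,d\mu$, and taking the supremum over $\mu \in \Lambda$ on both sides gives $a_{n+m} \leq a_n + a_m$. (If some $a_n = -\infty$ the identity is trivial, so we may assume each $a_n$ is finite.) The easy inequality $\beta^\Lambda[(f_n)] \leq \inf_n a_n/n$ is then immediate: for any $\mu \in \Lambda$ and any $n$, $\inf_k \frac{1}{k}\int f_k\,d\mu \leq \frac{1}{n}\int f_n\,d\mu \leq a_n/n$, and one takes $\sup_{\mu\in\Lambda}$ followed by $\inf_n$.

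For the reverse inequality $\lim_n a_n/n \leq \beta^\Lambda[(f_n)]$, I would use the compactness of $\Lambda$ together with Theorem \ref{lem5}. Since each $f_n$ is upper semicontinuous on the compact space $X$, the functional $\mu \mapsto \int f_n\,d\mu$ is weak$^*$ upper semicontinuous on $\Lambda$ and hence attains its supremum, so pick $\mu_n \in \Lambda$ with $\int f_n\,d\mu_n = a_n$. Extract a weak$^*$-convergent subsequence $\mu_{n_i} \to \mu$; closedness of $\Lambda$ gives $\mu \in \Lambda$. Because each $\mu_n$ is already $T$-invariant, the Ces\`aro average $\frac{1}{n}\sum_{j=0}^{n-1}\mu_n \circ T^{-j}$ equals $\mu_n$ itself, so Theorem \ref{lem5} applied with $\nu_n := \mu_n$ yields
\[
\lim_{n\to+\infty} \frac{a_n}{n} \;=\; \limsup_{i\to+\infty} \frac{1}{n_i}\int f_{n_i}\,d\mu_{n_i} \;\leq\; \lim_{n\to+\infty} \frac{1}{n}\int f_n\,d\mu \;\leq\; \beta^\Lambda[(f_n)],
\]
which finishes the argument.

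The main subtlety is ensuring that the limit measure $\mu$ actually lies in $\Lambda$, so that it is an admissible test measure for $\beta^\Lambda$; this is precisely where compactness of $\Lambda$ enters, and Theorem \ref{lem5} is the natural device for pushing the upper semicontinuity of the $f_n$ through the weak$^*$ limit. The $T$-invariance of the $\mu_n$ is what makes Theorem \ref{lem5} directly applicable without any Birkhoff-type averaging step. Everything else — the subadditivity of $(a_n)$, the invocation of Fekete, and the easy inequality — is routine.
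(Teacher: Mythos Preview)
Your proposal is correct and follows essentially the same route as the paper: both establish subadditivity of $a_n := \sup_{\mu\in\Lambda}\int f_n\,d\mu$ to invoke Fekete, note the easy inequality $\beta^\Lambda[(f_n)]\leq \inf_n a_n/n$, then pick maximisers $\mu_n\in\Lambda$, pass to a limit point $\mu\in\Lambda$ by compactness, and apply Theorem~\ref{lem5} (using that invariant $\mu_n$ coincide with their own Ces\`aro averages) to obtain $\lim_n a_n/n \leq \lim_n \tfrac{1}{n}\int f_n\,d\mu \leq \beta^\Lambda[(f_n)]$. Your explicit handling of the degenerate case $a_n=-\infty$ and the justification that the supremum is attained are slightly more detailed than the paper, but the argument is otherwise identical.
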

\begin{proof}
	For the first equality, it's enough to show that $\beta^\Lambda[(f_n)]\geq\inf\limits_{n\geq1}\frac{1}{n}\sup\limits_{\mu\in\Lambda}\int f_n\mathrm{d\mu}$. The second equality is from the fact that $\sup\limits_{\mu\in\Lambda}\int f_{m+n}\mathrm{d\mu}\leq\sup\limits_{\mu\in\Lambda}\int f_m\mathrm{d\mu}+\sup\limits_{\mu\in\Lambda}\int f_n\mathrm{d\mu}$ for any $n\geq1, m\geq1$.
	
	For each $n\geq1$, we choose $\nu_n\in\cM^\Lambda_{max}(f_n)$. Since $\Lambda$ is compact, it's closed. Suppose that $\mu$ is a limit point of $\{\nu_{n}\}$, then $\mu\in\Lambda$. By Theorem \ref{lem5}, we have
	\[
	\begin{split}
		\inf_{n\geq1}\frac{1}{n}\int f_n\mathrm{d\mu}&=\lim\limits_{n\to+\infty}\frac{1}{n}\int f_n\mathrm{d\mu}\\&\geq\limsup_{i\to+\infty}\frac{1}{n_i}\int f_{n_i}\mathrm{d}\nu_{n_i}\\&=\limsup_{i\to+\infty}\frac{1}{n_i}\sup_{\nu\in\Lambda}\int f_{n_i}\mathrm{d}\nu\\&=\lim_{i\to+\infty}\frac{1}{i}\sup_{\nu\in\Lambda}\int f_{i}\mathrm{d}\nu\\&\geq\beta^\Lambda[(f_n)].
	\end{split}
	\]
	Hence, $\mu\in\cM^\Lambda_{max}[(f_n)]$ and $\beta^\Lambda[(f_n)]\geq\inf\limits_{n\geq1}\frac{1}{n}\sup\limits_{\mu\in\Lambda}\int f_n\mathrm{d\mu}$.
\end{proof}
From the proof of this lemma, we have
\begin{theorem}
	Support that $\Lambda$ is a nonempty and compact subset of $\cM(X,T)$ and $\{f_n\}_{n=1}^{+\infty}$ is a subadditive sequence of upper semicontimuous functions from $X$ to $\mathbb{R}\cup\{-\infty\}$, we choose $\mu_{n}\in\cM^\Lambda_{max}(f_n)$ for each $n\geq1$. If $\mu$ is a limit point of $\{\mu_{n}\}$, then $\mu\in\cM^\Lambda_{max}[(f_n)]$.
\end{theorem}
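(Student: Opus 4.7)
The plan is to run essentially the same computation that appears in the proof of Lemma \ref{lem1}, only observing that the specific selection $\nu_n \in \cM^\Lambda_{max}(f_n)$ made there played no role beyond the defining property of being a maximizer over $\Lambda$; replacing $\nu_n$ by our arbitrary $\mu_n$ changes nothing in the argument.

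First I would pass to a subsequence along which $\mu_{n_i}$ converges weakly to $\mu$; since each $\mu_{n_i}$ is $T$-invariant and $\Lambda$ is compact (hence closed), the limit $\mu$ lies in $\Lambda\subseteq\cM(X,T)$. Next I would apply Theorem \ref{lem5} to the sequence $\nu_{n_i}:=\mu_{n_i}$. Because each $\mu_{n_i}$ is already $T$-invariant, the Ces\`aro-averaged sequence $\frac{1}{n_i}\sum_{j=0}^{n_i-1}\mu_{n_i}\circ T^{-j}$ coincides with $\mu_{n_i}$ itself and therefore still converges to $\mu$. The conclusion of Theorem \ref{lem5} then yields
$$\limsup_{i\to+\infty}\frac{1}{n_i}\int f_{n_i}\,\mathrm{d}\mu_{n_i}\;\leq\;\lim_{n\to+\infty}\frac{1}{n}\int f_n\,\mathrm{d}\mu\;=\;\inf_{n\geq 1}\frac{1}{n}\int f_n\,\mathrm{d}\mu,$$
the last equality being Fekete's lemma applied to the subadditive sequence $n\mapsto\int f_n\,\mathrm{d}\mu$.

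To close the loop, I would use the hypothesis $\mu_{n_i}\in\cM^\Lambda_{max}(f_{n_i})$, which gives $\int f_{n_i}\,\mathrm{d}\mu_{n_i}=\sup_{\nu\in\Lambda}\int f_{n_i}\,\mathrm{d}\nu$, and then invoke Lemma \ref{lem1} to identify $\lim_{i\to+\infty}\frac{1}{n_i}\sup_{\nu\in\Lambda}\int f_{n_i}\,\mathrm{d}\nu=\beta^\Lambda[(f_n)]$. Combining with the displayed inequality gives $\inf_{n\geq 1}\frac{1}{n}\int f_n\,\mathrm{d}\mu\geq\beta^\Lambda[(f_n)]$; the reverse inequality is automatic from $\mu\in\Lambda$ and the definition of $\beta^\Lambda[(f_n)]$, so $\mu\in\cM^\Lambda_{max}[(f_n)]$. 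There is no real obstacle here—the theorem is essentially the statement obtained by isolating the limit-point construction that is already buried inside the proof of Lemma \ref{lem1}, the only thing to verify being that nothing in that argument depended on the particular maximizers chosen.
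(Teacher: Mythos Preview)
Your proposal is correct and follows exactly the paper's approach: the theorem is stated there with the one-line justification ``From the proof of this lemma, we have\ldots'', referring to the computation inside the proof of Lemma~\ref{lem1} that you have spelled out. Your only cosmetic difference is that you cite Lemma~\ref{lem1} itself for the identification $\lim_{n\to\infty}\frac{1}{n}\sup_{\nu\in\Lambda}\int f_n\,\mathrm{d}\nu=\beta^\Lambda[(f_n)]$, whereas the paper derives this identity in the same breath; since the theorem is placed after Lemma~\ref{lem1}, your ordering is fine.
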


Since for any fixed $n\geq1$, $\sup\limits_{\mu\in\Lambda}\frac{1}{n}\int log\|A(n,x)\|\mathrm{d\mu}$ is a continuous function from $C(X,GL_d(\mathbb{R}))$ to $\mathbb{R}$. By Lemma \ref{lem1}, we have $\beta^\Lambda$ is the limit of an everywhere convergent sequence of continuous functions. By \emph{Baire's theorem on functions of first class} \cite[Theorem 7.3]{Oxtoby1980}, if we denote $$\tilde{\mathcal{C}^\Lambda}:=\left\{A\in C(X,GL_d(\mathbb{R}))\mid\beta^\Lambda \text{ is continuous at }A\right\},$$ then $C(X,GL_d(\mathbb{R}))\setminus\tilde{\mathcal{C}^\Lambda}$ is first category, which means that $\tilde{\mathcal{C}}^\Lambda$ is residual in $C(X,GL_d(\mathbb{R}))$.
\begin{remark}
	$\tilde{\mathcal{C}^\Lambda}$ may not be $C(X,GL_d(\mathbb{R}))$, see \cite[Corollary 6]{F1997} for an example.
\end{remark} 
For any $f\in C(X)$, we define a function $\Gamma(f):C(X,GL_d(\mathbb{R}))\to C(X,GL_d(\mathbb{R}))$ as $\Gamma(f)(A):=e^fA$ for any $A\in C(X,GL_d(\mathbb{R}))$. Then $\Gamma(f)$ is a topological homeomorphism with $(\Gamma(f))^{-1}=\Gamma(-f)$. Generally, for $n\in\mathbb{Z}$, 
\[(\Gamma(f))^n:=
\begin{cases}
	\underbrace{\Gamma(f)\circ\cdots\circ\Gamma(f)}_\text{n terms}&\text{if } n>0,\\
	id&\text{if } n=0,\\
	\underbrace{(\Gamma(f))^{-1}\circ\cdots\circ(\Gamma(f))^{-1}}_\text{(-n) terms}&\text{if } n<0.
\end{cases}
\]
Then, we have that $(\Gamma(f))^n=\Gamma(nf)$, for any $n\in\mathbb{Z}$. 

By \cite[Theorem 6.4]{Walters1982} there exists a countable  set of continuous functions $\{\gamma_i\}_{i=1}^{+\infty}$  such that $\|\gamma_i\|=1$ for any $i\in\mathbb{N^+}$ and  $$\varrho(\mu_{1},\mu_{2}):=\sum_{i=1}^{+\infty}\frac{|\int \gamma_i\mathrm{d}\mu_{1}-\int \gamma_i\mathrm{d}\mu_{2}|}{2^{i}}$$ defines a metric for the weak*-topology on $\mathcal{M}(X).$

Suppose that $\{\frac{1}{j}\gamma_i\mid i\in\mathbb{N^+},j\in\mathbb{N^+}\}=\{\eta_i\mid i\in\mathbb{N^+}\}$. We define a sequence subsets $\{\mathcal{C}^\Lambda_n\}_{n=1}^{+\infty}$ of $\tilde{\mathcal{C}^\Lambda}$ by induction: $\mathcal{C}^\Lambda_1:=\bigcap\limits_{i\in\mathbb{Z}}\Gamma(i\eta_1)(\tilde{\mathcal{C}^\Lambda})$, $\mathcal{C}^\Lambda_{n+1}:=\bigcap\limits_{i\in\mathbb{Z}}\Gamma(i\eta_{n+1})(\mathcal{C}^\Lambda_n)\subset\mathcal{C}^\Lambda_n\subset\tilde{\mathcal{C}^\Lambda}$, for any $n\geq1$. Since $\tilde{\mathcal{C}^\Lambda}$ is residual in $C(X,GL_d(\mathbb{R}))$, we have that $\mathcal{C}^\Lambda_n$ is residual in $C(X,GL_d(\mathbb{R}))$, for any $n\geq1$. Let $$\mathcal{C}^\Lambda:=\bigcap\limits_{n=1}^{+\infty}\mathcal{C}^\Lambda_n,$$ then it can be checked that
\begin{enumerate}[(1)]
	\item $\mathcal{C}^\Lambda\subset\tilde{\mathcal{C}^\Lambda}$;
	\item $\mathcal{C}^\Lambda$ is residual in $C(X,GL_d(\mathbb{R}))$;
	\item $\Gamma(\eta_n)(\mathcal{C}^\Lambda)=\Gamma(-\eta_n)(\mathcal{C}^\Lambda)=\mathcal{C}^\Lambda$ for any $n\geq1$.
\end{enumerate}

\begin{lemma}\label{lem2}
	Suppose that $Y$ is a complete metric space, a set $E\subset Y$ satisfies that $E$ is residual in $Y$. If $F\subset E$ and $F$ is residual in $E$, then $F$ is residual in $Y$.
\end{lemma}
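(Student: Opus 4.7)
The plan is to chase through the equivalent definition of \emph{residual} as ``contains a dense $G_\delta$'' and then combine two witnesses by a single Baire category argument in $Y$. Since $F$ is residual in $E$ (with the subspace topology), I would first pick a set $G\subseteq E$ that is $G_\delta$ in $E$, dense in $E$, and contained in $F$. Writing $G=\bigcap_{n\geq 1}(V_n\cap E)$ with each $V_n$ open in $Y$, and noting that $G\subseteq V_n\cap E$ together with density of $G$ in $E$ forces $V_n\cap E$ to be dense in $E$, I may assume each $V_n\cap E$ is dense in $E$.

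Next, I would use the fact that a residual subset of a complete (hence Baire) metric space is automatically dense, so $E$ is dense in $Y$. From this I would upgrade density of $V_n\cap E$ in $E$ to density of $V_n$ in $Y$: for any nonempty open $W\subseteq Y$, the intersection $W\cap E$ is nonempty and open in $E$, so it meets the dense subset $V_n\cap E$, which forces $V_n\cap W\neq\emptyset$. At the same time, using residuality of $E$ in $Y$, I would fix a dense $G_\delta$ subset $H\subseteq E$ of $Y$, written as $H=\bigcap_{n\geq 1} W_n$ with each $W_n$ open and dense in $Y$.

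The last step is to set $K:=\bigcap_{n\geq 1}(V_n\cap W_n)$: a countable intersection of open dense subsets of $Y$, and hence a dense $G_\delta$ of $Y$ by the Baire category theorem applied in $Y$. Since $K\subseteq\bigcap_n W_n=H\subseteq E$, I get $K\subseteq\bigcap_n(V_n\cap E)=G\subseteq F$, so $F$ contains a dense $G_\delta$ of $Y$ and is therefore residual in $Y$. The only mildly delicate moment is the transfer from density in the subspace $E$ to density in $Y$, which is exactly where the hypothesis that $E$ is residual (not merely a subset) in the complete space $Y$ is used; beyond that point the argument is pure bookkeeping with $G_\delta$ witnesses, and I do not foresee any substantial obstacle.
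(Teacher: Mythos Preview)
Your proposal is correct and follows essentially the same approach as the paper's proof: both pick open-dense witnesses $V_n$ (the paper's $I_n$) for $F$ in $E$ and $W_n$ (the paper's $G_n$) for $E$ in $Y$, and then show $F$ contains the countable intersection $\bigcap_{n,j}(V_n\cap W_j)$ of open dense subsets of $Y$. The paper asserts without detail that the $I_n$ may be taken open and dense in $Y$, whereas you supply the (correct) one-line verification using density of $E$ in $Y$; apart from that extra care, the arguments are the same.
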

\begin{proof}
	Suppose that $E\supset\bigcap\limits_{n=1}^{+\infty}G_n$, where $G_n$ is an open and dense subset of $Y$ for each $n\geq1$. since $F$ is residual in $E$, we can find a sequence $\{H_n\}_{n=1}^{+\infty}$ such that $F\supset\bigcap\limits_{n=1}^{+\infty}H_n$ and $H_n$ is an open and dense subset of $E$ for each $n\geq1$. Hence, there is a sequence $\{I_n\}_{n=1}^{+\infty}$ such that $H_n=I_n\cap E$, and $I_n$ is an open and dense subset of $Y$ for each $n\geq1$. As a result, $$F\supset\bigcap\limits_{n=1}^{+\infty}H_n=\bigcap\limits_{n=1}^{+\infty}(I_n\cap E)\supset\bigcap\limits_{n=1}^{+\infty}\bigcap\limits_{j=1}^{+\infty}(I_n\cap G_j).$$ Then $F$ is residual in $Y$. 
\end{proof}
We denote $$\mathcal{U}^\Lambda(\mathcal{C}^\Lambda):=\left\{A\in \mathcal{C}^\Lambda\mid \cM^\Lambda_{max}(A) \text{ is a singleton}\right\}.$$ Then to prove Theorem \ref{maintheorem-1} (1), we only need to prove that $\mathcal{U}^\Lambda(\mathcal{C}^\Lambda)$ is residual in $\mathcal{C}^\Lambda$.

\begin{lemma}\label{lem3}
Suppose that $A\in C(X,GL_d(\mathbb{R}))$ and $\{A_n\}_{n=1}^{+\infty}$ is a sequence of $C(X,GL_d(\mathbb{R}))$. For each $n\geq1$, $\mu_{n}\in\cM^\Lambda_{max}(A_n)$ and $\mu\in\Lambda$ is a limit point of $\{\mu_{n}\}$. Suppose that
\begin{enumerate}[(1)]
\item $\lim\limits_{n\to+\infty}\rho(A_n,A)=0$;
\item $\lim\limits_{n\to+\infty}\beta^\Lambda(A_n)=\beta^\Lambda(A)$.
\end{enumerate}
Then $\mu\in\cM^\Lambda_{max}(A)$.
\end{lemma}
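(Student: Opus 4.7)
The plan is to verify $\mu\in\cM^\Lambda_{max}(A)$ directly from the defining condition. Since $\mu\in\Lambda$, the inequality
\[
\inf_{k\geq 1}\frac{1}{k}\int \log\|A(k,x)\|\,d\mu \;\leq\; \beta^\Lambda(A)
\]
holds automatically from the definition of $\beta^\Lambda(A)$ as a supremum over $\Lambda$, so the entire content of the lemma reduces to establishing the reverse inequality.

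First I would pass to a subsequence $\{\mu_{n_j}\}$ converging weakly-$*$ to $\mu$ (which exists by hypothesis). Fix $k\geq 1$ arbitrary. Since $\mu_{n_j}\in\cM^\Lambda_{max}(A_{n_j})$, the infimum in the definition gives
\[
\frac{1}{k}\int \log\|A_{n_j}(k,x)\|\,d\mu_{n_j} \;\geq\; \inf_{\ell\geq 1}\frac{1}{\ell}\int \log\|A_{n_j}(\ell,x)\|\,d\mu_{n_j} \;=\; \beta^\Lambda(A_{n_j}).
\]

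Next I would pass to the limit in $j$ on both sides. The right-hand side converges to $\beta^\Lambda(A)$ by assumption~(2). For the left-hand side, the product formula $A_n(k,x)=A_n(T^{k-1}x)\cdots A_n(x)$ together with $\rho(A_n,A)\to 0$ and compactness of $X$ yields uniform convergence $A_n(k,\cdot)\to A(k,\cdot)$ on $X$; crucially, the control over inverses built into the metric $\rho$ guarantees that $\|A_n(k,x)\|$ stays uniformly bounded away from $0$ (and above) in $n$ and $x$, so $\log\|A_n(k,\cdot)\|$ converges uniformly to $\log\|A(k,\cdot)\|$. Combined with $\mu_{n_j}\xrightarrow{w^*}\mu$, the standard estimate
\[
\left|\int \log\|A_{n_j}(k,x)\|\,d\mu_{n_j}-\int \log\|A(k,x)\|\,d\mu\right| \leq \bigl\|\log\|A_{n_j}(k,\cdot)\|-\log\|A(k,\cdot)\|\bigr\|_\infty + \left|\int \log\|A(k,x)\|\,d(\mu_{n_j}-\mu)\right|
\]
tends to $0$.

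Hence $\frac{1}{k}\int \log\|A(k,x)\|\,d\mu \geq \beta^\Lambda(A)$ for every $k\geq 1$, so taking the infimum over $k$ yields $\inf_{k\geq 1}\frac{1}{k}\int \log\|A(k,x)\|\,d\mu \geq \beta^\Lambda(A)$, and combined with the automatic reverse inequality we conclude $\mu\in\cM^\Lambda_{max}(A)$. The only mildly delicate step is the uniform convergence of $\log\|A_{n_j}(k,\cdot)\|$; this is where both components of $\rho$ (control on $A$ \emph{and} on $A^{-1}$) are essential, to prevent the norms from degenerating. Note that hypothesis~(2) cannot be dropped at the level of this lemma because $\beta^\Lambda$ is only upper semicontinuous in general, which is exactly why later arguments will restrict to the residual continuity set $\tilde{\mathcal{C}^\Lambda}$.
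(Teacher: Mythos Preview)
Your proof is correct and follows essentially the same route as the paper: pass to a convergent subsequence, fix $k$, use $\mu_{n_j}\in\cM^\Lambda_{max}(A_{n_j})$ to bound $\frac{1}{k}\int\log\|A_{n_j}(k,x)\|\,d\mu_{n_j}\geq\beta^\Lambda(A_{n_j})$, and let $j\to\infty$ using uniform convergence of $\log\|A_{n_j}(k,\cdot)\|$ together with weak-$*$ convergence of $\mu_{n_j}$ on the left and hypothesis~(2) on the right. Your write-up is in fact slightly more explicit than the paper's about why the uniform convergence holds (the two-sided control in $\rho$) and why hypothesis~(2) is not redundant.
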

\begin{proof}
	Suppose that $\mu=\lim\limits_{k\to+\infty}\mu_{n_k}$. For any $m\geq1$, we have $$\lim_{k\to+\infty}\left|\frac{1}{m}\int\left(log\|A_{n_k}(m,x)\|-log\|A(m,x)\|\right)\mathrm{d}\mu_{n_k}\right|=0,$$ and $$\lim_{k\to+\infty}\left|\frac{1}{m}\int log\|A(m,x)\|\mathrm{d}\mu_{n_k}-\frac{1}{m}\int log\|A(m,x)\|\mathrm{d}\mu\right|=0.$$ Combining these two estimates, we obtain $$\lim_{k\to+\infty}\frac{1}{m}\int log\|A_{n_k}(m,x)\|\mathrm{d}\mu_{n_k}=\frac{1}{m}\int log\|A(m,x)\|\mathrm{d}\mu.$$ Hence, for any $m\geq1$, 	
	\[
	\begin{split}
		\frac{1}{m}\int log\|A(m,x)\|\mathrm{d}\mu&=\lim_{k\to+\infty}\frac{1}{m}\int log\|A_{n_k}(m,x)\|\mathrm{d}\mu_{n_k}\\
		&\geq\lim_{k\to+\infty}\inf\limits_{n\geq1}\frac{1}{n}\int log\|A_{n_k}(n,x)\|\mathrm{d}\mu_{n_k}\\&=\lim_{k\to+\infty}\beta^\Lambda(A_{n_k})\\&=\beta^\Lambda(A).
	\end{split}
	\]
Then $\inf\limits_{m\geq1}\frac{1}{m}\int log\|A(m,x)\|\mathrm{d}\mu\geq\beta^\Lambda(A)$ and $\mu\in\cM^\Lambda_{max}(A)$.
\end{proof}

Since for every $\varepsilon>0$, $n\geq1$, $$\sup_{\mu\in\Lambda}\left|\frac{1}{n}\int log\|e^{\varepsilon f}A(n,x)\|\mathrm{d\mu}-\frac{1}{n}\int log\|A(n,x)\|\mathrm{d\mu}\right|=\varepsilon\sup_{\mu\in\Lambda}\left|\int f\mathrm{d\mu}\right|\leq\varepsilon\|f\|.$$ Given $f\in C(X)$, we have $\lim\limits_{\varepsilon\to0}\beta^\Lambda(e^{\varepsilon f}A)=\beta^\Lambda(A)$ uniformly for $A\in C(X,GL_d(\mathbb{R}))$.
\begin{definition}
	Support that $\Lambda$ is a nonempty and compact subset of $\cM(X,T)$. For any $f\in C(X)$, $A\in C(X,GL_d(\mathbb{R}))$, define $$\beta^\Lambda(f\mid A):=\sup_{\mu\in\cM^\Lambda_{max}(A)}\int f\mathrm{d}\mu,$$ the relative maximum ergodic average of $f$ given $A$ and $\Lambda$. Define $$\cM^\Lambda_{max}(f\mid A):=\left\{\mu\in\cM^\Lambda_{max}(A)\mid\int f\mathrm{d}\mu=\beta^\Lambda(f\mid A)\right\}.$$
\end{definition}
Suppose that $C$ and $D$ are nonempty compact subsets of $\mathbb{R}$, then the \emph{Hausdorff metric} $d_H$ is defined by $d_H(C,D)=\max\limits_{a\in C}\min\limits_{b\in D}|a-b|+\max\limits_{c\in D}\min\limits_{d\in C}|c-d|$.
\begin{lemma}\label{lem4}
	Given $f\in C(X)$, $A\in C(X,GL_d(\mathbb{R}))$, we have $$\left\{\int f\mathrm{d\mu}\mid\mu\in\cM^\Lambda_{max}(e^{\varepsilon f}A)\right\}\longrightarrow\left\{\beta^\Lambda(f\mid A)\right\} \text{  as } \varepsilon\searrow 0,$$ in the sense of the Hausdorff metric.
\end{lemma}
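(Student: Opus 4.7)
The plan is to exploit the fact that $e^{\varepsilon f}$ is a scalar function, so that scalars commute past matrix products: one has $e^{\varepsilon f}A(n,x) = e^{\varepsilon S_n f(x)}A(n,x)$ with $S_n f(x) = \sum_{i=0}^{n-1} f(T^i x)$. Integrating against any $\mu \in \Lambda$ (which is $T$-invariant) gives
\[
\frac{1}{n}\int \log\|e^{\varepsilon f}A(n,x)\|\,d\mu \;=\; \varepsilon \int f\,d\mu \;+\; \frac{1}{n}\int \log\|A(n,x)\|\,d\mu.
\]
Since the first summand is independent of $n$, the infimum over $n$ splits cleanly, yielding $\Phi_\varepsilon(\mu) = \varepsilon \int f\,d\mu + \Phi_0(\mu)$, where $\Phi_\bullet(\mu) := \inf_{n\geq 1}\frac{1}{n}\int \log\|\bullet(n,x)\|\,d\mu$. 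This reduces the lemma to a perturbation statement for the upper semicontinuous functional $\Phi_0$ on $\Lambda$ by the small continuous perturbation $\varepsilon \int f\,d\mu$.

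For the lower bound, I would fix any $\mu^{\ast} \in \cM^\Lambda_{max}(f\mid A)$, so that $\Phi_0(\mu^{\ast}) = \beta^\Lambda(A)$ and $\int f\,d\mu^{\ast} = \beta^\Lambda(f\mid A)$. For an arbitrary $\mu_\varepsilon \in \cM^\Lambda_{max}(e^{\varepsilon f}A)$, the optimality inequality $\Phi_\varepsilon(\mu_\varepsilon) \geq \Phi_\varepsilon(\mu^{\ast})$ combined with $\Phi_0(\mu_\varepsilon) \leq \beta^\Lambda(A)$ rearranges, after dividing by $\varepsilon > 0$, to $\int f\,d\mu_\varepsilon \geq \beta^\Lambda(f\mid A)$. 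Thus every element of the set $\{\int f\,d\mu \mid \mu \in \cM^\Lambda_{max}(e^{\varepsilon f}A)\}$ lies at or above $\beta^\Lambda(f\mid A)$ for every $\varepsilon>0$.

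For the upper bound I would argue by contradiction. Suppose there exist $\eta > 0$, a sequence $\varepsilon_n \searrow 0$, and measures $\mu_n \in \cM^\Lambda_{max}(e^{\varepsilon_n f}A)$ with $\int f\,d\mu_n \geq \beta^\Lambda(f\mid A) + \eta$. By compactness of $\Lambda$, extract a weak$^{\ast}$ subsequential limit $\mu \in \Lambda$; continuity of $\nu \mapsto \int f\,d\nu$ preserves the strict inequality $\int f\,d\mu \geq \beta^\Lambda(f\mid A) + \eta$. Because $e^{\varepsilon_n f}A \to A$ in the metric $\rho$ and, as noted in the paper just before the definition of $\beta^\Lambda(f\mid A)$, $\beta^\Lambda(e^{\varepsilon_n f}A) \to \beta^\Lambda(A)$, Lemma \ref{lem3} applies and forces $\mu \in \cM^\Lambda_{max}(A)$. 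This contradicts $\int f\,d\mu > \beta^\Lambda(f\mid A) = \sup_{\nu \in \cM^\Lambda_{max}(A)}\int f\,d\nu$.

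Combining the two bounds, the set $\{\int f\,d\mu_\varepsilon \mid \mu_\varepsilon \in \cM^\Lambda_{max}(e^{\varepsilon f}A)\}$ is contained in $[\beta^\Lambda(f\mid A),\, \beta^\Lambda(f\mid A) + o(1)]$ as $\varepsilon \searrow 0$, which is exactly Hausdorff convergence to $\{\beta^\Lambda(f\mid A)\}$. The main obstacle is the upper bound: the lower bound is a one-line optimality rearrangement, but forcing the subsequential limit $\mu$ to be an $A$-maximiser relies crucially on Lemma \ref{lem3} and the uniform convergence $\beta^\Lambda(e^{\varepsilon f}A) \to \beta^\Lambda(A)$ established just beforehand. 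No further machinery is needed.
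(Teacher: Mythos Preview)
Your proposal is correct and follows essentially the same approach as the paper's proof: both rely on the scalar identity $\Phi_\varepsilon(\mu)=\varepsilon\int f\,d\mu+\Phi_0(\mu)$, the optimality comparison of $\mu_\varepsilon$ against a fixed $\mu^\ast\in\cM^\Lambda_{max}(A)$ to get $\int f\,d\mu_\varepsilon\geq\beta^\Lambda(f\mid A)$, and Lemma~\ref{lem3} (together with $\beta^\Lambda(e^{\varepsilon f}A)\to\beta^\Lambda(A)$) to force subsequential limits into $\cM^\Lambda_{max}(A)$. The only cosmetic difference is organizational: the paper packages both directions into the single statement ``every weak$^\ast$ limit point of $m_\varepsilon$ lies in $\cM^\Lambda_{max}(f\mid A)$,'' whereas you separate the argument into an explicit lower bound (valid for every $\varepsilon>0$) and an upper bound by contradiction.
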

\begin{proof}
	Given $f\in C(X)$, $A\in C(X,GL_d(\mathbb{R}))$. Since $\cM^\Lambda_{max}(e^{\varepsilon f}A)$ is a nonempty and compact subset of $\cM(X,T)$, the set $\left\{\int f\mathrm{d\mu}\mid\mu\in\cM^\Lambda_{max}(e^{\varepsilon f}A)\right\}$ is a nonempty compact subset of $\mathbb{R}$. To prove the lemma, it is enough to show that if $a_\varepsilon\in \left\{\int f\mathrm{d\mu}\mid\mu\in\cM^\Lambda_{max}(e^{\varepsilon f}A)\right\}$, then $\lim\limits_{\varepsilon \searrow 0}a_\varepsilon=\beta^\Lambda(f\mid A)$. Writing $a_\varepsilon=\int f\mathrm{d}m_\varepsilon$ for some $m_\varepsilon\in \cM^\Lambda_{max}(e^{\varepsilon f}A)$, it is in turn enough to prove that any limit point of $m_\varepsilon$, as $\varepsilon \searrow 0$, belongs to $\cM^\Lambda_{max}(f\mid A)$. 
	
	Suppose that $\lim\limits_{i\to+\infty}m_{\varepsilon_i}=m$. By Lemma \ref{lem3}, we have that $m\in\cM^\Lambda_{max}(A)$. For any fixed $i\geq1$, since $m_{\varepsilon_i}\in \cM^\Lambda_{max}(e^{\varepsilon_i f}A)$, we have $$\inf\limits_{n\geq1}\frac{1}{n}\int log\|A(n,x)\|\mathrm{d}m_{\varepsilon_i}+\varepsilon_i\int f\mathrm{d}m_{\varepsilon_i}\geq\inf\limits_{n\geq1}\frac{1}{n}\int log\|A(n,x)\|\mathrm{d\mu}+\varepsilon_i\int f\mathrm{d}\mu,$$ for any $\mu\in\Lambda$. For any fixed $\mu\in\cM^\Lambda_{max}(A)$, we have $$\inf\limits_{n\geq1}\frac{1}{n}\int log\|A(n,x)\|\mathrm{d\mu}\geq\inf\limits_{n\geq1}\frac{1}{n}\int log\|A(n,x)\|\mathrm{d}m_{\varepsilon_i},$$ for any $i\geq1$. Combining these two estimates, we obtain $\varepsilon_i\int f\mathrm{d}m_{\varepsilon_i}\geq\varepsilon_i\int f\mathrm{d}\mu$, i.e. $\int f\mathrm{d}m_{\varepsilon_i}\geq\int f\mathrm{d}\mu$ for any $\mu\in\cM^\Lambda_{max}(A)$, any $i\geq1$. Let $i\to+\infty$, then $\int f\mathrm{d}m\geq\int f\mathrm{d}\mu$ for any $\mu\in\cM^\Lambda_{max}(A)$. Hence, $m\in\cM^\Lambda_{max}(f\mid A)$.
\end{proof}
Let's finish the proof of Theorem \ref{maintheorem-1} (1).
\begin{proof}[Proof of Theorem \ref{maintheorem-1} (1).]
 For any $A\in\mathcal{C}^\Lambda$, any $i\geq1$, define $$M^\Lambda_i(A):=\left\{\int \gamma_i\mathrm{d\mu}\mid\mu\in\cM^\Lambda_{max}(A)\right\}.$$ Then $A\in\mathcal{U}^\Lambda(\mathcal{C}^\Lambda)$ if and only if $M^\Lambda_i(A)$ is a singleton for any $i\geq1$. Define $$E^\Lambda_{i,j}:=\left\{A\in\mathcal{C}^\Lambda\mid \diam(M^\Lambda_i(A))\geq\frac{1}{j}\right\},$$ where $i\geq1$, $j\geq1$. Then
	 \[
	 \begin{split}
	 	\mathcal{C}^\Lambda\setminus\mathcal{U}^\Lambda(\mathcal{C}^\Lambda)&=\left\{A\in\mathcal{C}^\Lambda\mid\diam(M^\Lambda_i(A))>0\text{ for some }i\in\mathbb{N^+}\right\}\\&=\bigcup_{i=1}^{+\infty}\bigcup_{j=1}^{+\infty}\left\{A\in\mathcal{C}^\Lambda\mid\diam(M^\Lambda_i(A))\geq\frac{1}{j}\right\}\\&=\bigcup_{i=1}^{+\infty}\bigcup_{j=1}^{+\infty}E^\Lambda_{i,j}.
	 \end{split}
	 \] And $$\mathcal{U}^\Lambda(\mathcal{C}^\Lambda)=\bigcap_{i=1}^{+\infty}\bigcap_{j=1}^{+\infty}(\mathcal{C}^\Lambda\setminus E^\Lambda_{i,j}).$$ 
	To prove Theorem \ref{maintheorem-1} (1), it is enough to show that $E^\Lambda_{i,j}$ is closed and has empty interior in $\mathcal{C}^\Lambda$.
	
	To show that $E^\Lambda_{i,j}$ is closed in $\mathcal{C}^\Lambda$, let $\{A_n\}_{n=1}^{+\infty}\subset E^\Lambda_{i,j}$ with $\lim\limits_{n\to+\infty}A_n=A\in\mathcal{C}^\Lambda$. Write $\diam(M^\Lambda_i(A_n))=\int\gamma_i\mathrm{d}\mu_{n}^+-\int\gamma_i\mathrm{d}\mu_{n}^-\geq\frac{1}{j}$ for measures $\mu_{n}^-,\mu_{n}^+\in\cM^\Lambda_{max}(A_n)$. Then there exists $\mu^-,\mu^+\in\Lambda$ such that $\lim\limits_{n\to+\infty}\mu_{r_n}^-=\mu^-$ and $\lim\limits_{n\to+\infty}\mu_{s_n}^+=\mu^+$, where $r_1<r_2<\cdots, s_1<s_2<\cdots$. By Lemma \ref{lem3}, $\mu^-,\mu^+\in\cM^\Lambda_{max}(A)$. Since $\int\gamma_i\mathrm{d}\mu_{r_n}^-\to\int\gamma_i\mathrm{d}\mu^-$ and $\int\gamma_i\mathrm{d}\mu_{s_n}^+\to\int\gamma_i\mathrm{d}\mu^+$, we have $\int\gamma_i\mathrm{d}\mu^+-\int\gamma_i\mathrm{d}\mu^-\geq\frac{1}{j}$. Hence, $A\in E^\Lambda_{i,j}$ and $E^\Lambda_{i,j}$ is closed.
	
	To show that $E^\Lambda_{i,j}$ has empty interior in $\mathcal{C}^\Lambda$, let $A\in E^\Lambda_{i,j}$ be arbitrary. By Lemma \ref{lem4}, $$M^\Lambda_i(e^{\varepsilon\gamma_i}A)=\left\{\int \gamma_i\mathrm{d\mu}\mid\mu\in\cM^\Lambda_{max}(e^{\varepsilon \gamma_i}A)\right\}\longrightarrow\left\{\beta^\Lambda(\gamma_i\mid A)\right\} \text{  as } \varepsilon\searrow 0.$$ In particular, $\diam(M^\Lambda_i(e^{\frac{1}{t}\gamma_i}A))<\frac{1}{j}$ for $t\in\mathbb{N^+}$ sufficiently large. Since $e^{\frac{1}{t}\gamma_i}A\in\mathcal{C}^\Lambda$ for each $t\in\mathbb{N^+}$, we have that $E^\Lambda_{i,j}$ has empty interior in $\mathcal{C}^\Lambda$.
\end{proof}
\subsection{Proof of Theorem \ref{maintheorem-1} (2)} When $\#\Lambda=1$, $\mathcal{U}^\Lambda=C(X,GL_d(\mathbb{R}))$. Now suppose that $\#\Lambda=m>1$ and $\Lambda=\{\mu_{1},\mu_{2},\cdots,\mu_{m}\}$. For any $1\leq i\leq m$, since $\beta(A,\mu_{i}):=\inf\limits_{n\geq1}\frac{1}{n}\int log\|A(m,x)\|\mathrm{d}\mu_{i}=\lim\limits_{n\to+\infty}\frac{1}{n}\int log\|A(m,x)\|\mathrm{d}\mu_{i}$ is the limit of an everywhere convergent sequence of continuous functions. By \emph{Baire's theorem on functions of first class} \cite[Theorem 7.3]{Oxtoby1980}, if we denote $$\tilde{\mathcal{C}^\Lambda_i}:=\left\{A\in C(X,GL_d(\mathbb{R}))\mid\beta(A,\mu_{i}) \text{ is continuous at }A\right\},$$ then $C(X,GL_d(\mathbb{R}))\setminus\tilde{\mathcal{C}^\Lambda_i}$ is first category, which means that $\tilde{\mathcal{C}^\Lambda_i}$ is residual in $C(X,GL_d(\mathbb{R}))$. Let $\mathcal{G}^\Lambda:=\mathcal{U}^\Lambda\cap\bigcap\limits_{i=1}^{m}\tilde{\mathcal{C}^\Lambda_i}$, then $\mathcal{G}^\Lambda$ is residual in $C(X,GL_d(\mathbb{R}))$.

Let's finish the proof of Theorem \ref{maintheorem-1} (2).
\begin{proof}[Proof of Theorem \ref{maintheorem-1} (2).]
	Given $A\in\mathcal{G}^\Lambda$ with $\cM_{max}^\Lambda(A)=\{\mu_{l}\}$. there is $k_l\in\mathbb{R}$ satisfying $\beta(A,\mu_{i})<k_l<\beta(A,\mu_{l})$ for any $i\neq l$. Since $A\in\bigcap\limits_{i=1}^{m}\tilde{\mathcal{C}^\Lambda_i}$, for any $1\leq j\leq m$, there is an open subset $U_j$ of $C(X,GL_d(\mathbb{R}))$ such that $A\in U_j$ and $|\beta(B,\mu_{j})-\beta(A,\mu_{j})|<\frac{1}{2}|k_l-\beta(A,\mu_{j})|$ for any $B\in U_j$. Let $U:=\bigcap\limits_{j=1}^{m}U_j$. Then $A\in U$ and $U$ is an open subset of $C(X,GL_d(\mathbb{R}))$. For any $B\in U$, we have $\beta(B,\mu_{i})<k_l<\beta(B,\mu_{l})$ for any $i\neq l$. Hence, $U\subset\mathcal{U}^\Lambda$. Since $\mathcal{G}^\Lambda$ is residual in $C(X,GL_d(\mathbb{R}))$, we have that $C(X,GL_d(\mathbb{R}))\setminus\mathcal{U}^\Lambda$ is nowhere dense in $C(X,GL_d(\mathbb{R}))$.
\end{proof}   
\subsection{Proof of Theorem \ref{maintheorem-1} (3)} 
\begin{lemma}\label{lem7}
	For any $f\in C(X)$, $e^fI_d\in\tilde{\mathcal{C}^\Lambda}$.
\end{lemma}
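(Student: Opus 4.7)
The plan is to verify directly that $\beta^\Lambda$ is continuous at $A_0 := e^{f}I_d$. Upper semicontinuity of $\beta^\Lambda$ at every point of $C(X,GL_d(\mathbb{R}))$ is immediate: by Lemma \ref{lem1},
\[
\beta^\Lambda(A) = \inf_{k\geq 1}\frac{1}{k}\sup_{\mu\in\Lambda}\int \log\|A(k,x)\|\,d\mu,
\]
and for each fixed $k$ the right-hand side is continuous in $A$, so $\beta^\Lambda$ is an infimum of continuous functions. Moreover, writing $S_k f(x) := \sum_{i=0}^{k-1} f(T^i x)$, the scalar structure $A_0(k,x) = e^{S_k f(x)}I_d$ combined with $T$-invariance of any $\mu\in\Lambda$ gives $\frac{1}{k}\int \log\|A_0(k,x)\|\,d\mu = \int f\,d\mu$ for every $k$, hence $\beta^\Lambda(A_0) = \sup_{\mu\in\Lambda}\int f\,d\mu$, attained on some $\mu^{*}\in\cM^\Lambda_{max}(A_0)$.

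It remains to prove lower semicontinuity at $A_0$. Given a sequence $A_n \to A_0$ in $C(X,GL_d(\mathbb{R}))$, the plan is to compare $\beta^\Lambda(A_n)$ with the single measure $\mu^{*}$ and derive a multiplicative lower bound on $\|A_n(k,x)\|$. Fix $\varepsilon > 0$; since $\rho(A_n,A_0)\to 0$, for all $n$ sufficiently large and all $y\in X$,
\[
\|A_n(y)^{-1}\| \leq e^{-f(y)} + \varepsilon \leq e^{-f(y)}\bigl(1+\varepsilon\, e^{\|f\|}\bigr).
\]
Multiplying these estimates along the orbit in $A_n(k,x)^{-1} = A_n(x)^{-1}\cdots A_n(T^{k-1}x)^{-1}$ produces $\|A_n(k,x)^{-1}\| \leq e^{-S_k f(x)}\bigl(1+\varepsilon e^{\|f\|}\bigr)^k$, and the submultiplicative inequality $\|A_n(k,x)\|\cdot\|A_n(k,x)^{-1}\| \geq 1$ then yields
\[
\log\|A_n(k,x)\| \geq S_k f(x) - k\log\bigl(1+\varepsilon\, e^{\|f\|}\bigr).
\]

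Integrating against the $T$-invariant measure $\mu^{*}$ and dividing by $k$ collapses $\frac{1}{k}\int S_k f\,d\mu^{*}$ to $\int f\,d\mu^{*} = \beta^\Lambda(A_0)$, so for every $k\geq 1$,
\[
\frac{1}{k}\int \log\|A_n(k,x)\|\,d\mu^{*} \geq \beta^\Lambda(A_0) - \log\bigl(1+\varepsilon\, e^{\|f\|}\bigr).
\]
Since $\mu^{*}\in\Lambda$, this gives $\beta^\Lambda(A_n) \geq \inf_{k}\frac{1}{k}\int \log\|A_n(k,x)\|\,d\mu^{*} \geq \beta^\Lambda(A_0) - \log(1+\varepsilon e^{\|f\|})$ for all large $n$. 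Letting $n\to\infty$ and then $\varepsilon\to 0$ yields $\liminf_n \beta^\Lambda(A_n) \geq \beta^\Lambda(A_0)$, which combined with upper semicontinuity proves continuity at $A_0$.

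The only genuinely subtle step is obtaining a \emph{lower} bound on the growth of $\|A_n(k,x)\|$ from the $C^0$-perturbation hypothesis: a direct estimate gives only an upper bound, so the argument must route through $\|A_n(k,x)^{-1}\|$. This is precisely why the metric $\rho$ on $C(X,GL_d(\mathbb{R}))$ incorporates the $\|A^{-1}-B^{-1}\|$ term; everything else reduces to Lemma \ref{lem1} and the invariance identity $\frac{1}{k}\int S_k f\,d\mu = \int f\,d\mu$ for $\mu\in\cM(X,T)$.
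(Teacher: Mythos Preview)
Your proof is correct. The analytic core is the same as the paper's: both arguments obtain the needed lower bound on $\log\|A_n(k,x)\|$ by routing through the inverse, i.e.\ using $\|A_n(k,x)\|\geq 1/\|A_n(k,x)^{-1}\|$ together with submultiplicativity and the fact that $\|A_n(y)^{-1}\|$ is close to $e^{-f(y)}$. The organization differs, however. The paper first proves continuity at $I_d$ by the uniform two-sided estimate $-\int\log\|A(x)^{-1}\|\,d\mu\leq\frac{1}{n}\int\log\|A(n,x)\|\,d\mu\leq\int\log\|A(x)\|\,d\mu$, then reduces the general case to this one via the scalar factorization $\log\|A_n(m,x)\|-\log\|(e^{f}I_d)(m,x)\|=\log\|(e^{-f}A_n)(m,x)\|$. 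You instead split into upper and lower semicontinuity: the former is dispatched globally by Lemma \ref{lem1} (as an infimum of continuous functions), and the latter is proved directly at $e^{f}I_d$ using a single witness measure $\mu^{*}\in\cM^\Lambda_{max}(A_0)$. Your route gains a clean general upper semicontinuity statement for free, while the paper's route yields an explicit uniform modulus of continuity at $A_0$ without appealing to Lemma \ref{lem1}; either way the work happens in the same inequality.
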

\begin{proof}
	(1) First, we will prove that $I_d\in\tilde{\mathcal{C}^\Lambda}$. Since $\|A(n,x)\|\leq\|A(T^{n-1}x)\|\cdots\|A(x)\|$ and $$\|A(n,x)\|\geq\frac{1}{\|A(n,x)^{-1}\|}=\frac{1}{\|A(x)^{-1}\cdots A(T^{n-1}x)^{-1}\|}\geq\frac{1}{\|A(x)^{-1}\|\cdots\|A(T^{n-1}x)^{-1}\|},$$ we have that $$-\int log\|A(x)^{-1}\|\mathrm{d\mu}\leq\frac{1}{n}\int log\|A(n,x)\|\mathrm{d\mu}\leq \int log\|A(x)\|\mathrm{d\mu},$$ for any $\mu\in\Lambda$ and any $n\geq1$. Therefore, $$|\beta^\Lambda(A)|\leq\sup_{\mu\in\Lambda}|\inf_{n\geq1}\frac{1}{n}\int log\|A(n,x)\|\mathrm{d\mu}|\leq\sup_{x\in X}\{|log\|A(x)\||,|log\|A(x)^{-1}\||\}.$$ Hence, $\beta^\Lambda(A)\to0$ as $A\to I_d$, which means that $I_d\in\tilde{\mathcal{C}^\Lambda}$.
	
	(2) Second, we will prove that $e^fI_d\in\tilde{\mathcal{C}}^\Lambda$ for any $f\in C(X)$. Suppose that $A_n\to e^fI_d$, then $e^{-f}A_n\to I_d$. Since $$\frac{1}{m}\int log\|A_n(m,x)\|\mathrm{d\mu}-\frac{1}{m}\int log\|(e^fI_d)(m,x)\|\mathrm{d\mu}=\frac{1}{m}\int log\|(e^{-f}A_n)(m,x)\|\mathrm{d\mu},$$ for any $\mu\in\Lambda$ and any $m\geq1$. Similarly, we have $\beta^\Lambda(A_n)\to\beta^\Lambda(e^fI_d)$ as $A_n\to e^fI_d$, which means that $e^fI_d\in\tilde{\mathcal{C}^\Lambda}$.
\end{proof}

Let's finish the proof of Theorem \ref{maintheorem-1} (3).
\begin{proof}[Proof of Theorem \ref{maintheorem-1} (3).]
    Suppose that $F:=\bigcup\limits_{A\in\mathcal{F}}\cM^\Lambda_{max}(A)$. Given $\mu\in\cM^e(X,T)\cap\Lambda$, by Theorem \ref{lem8}, there exists $h\in C(X)$ such that $\cM_{max}(h)=\{\mu\}$. Let $A=e^hI_d$, then $\cM^\Lambda_{max}(A)=\{\mu\}$ and $A\in\tilde{\mathcal{C}^\Lambda}\cap\mathcal{U}^\Lambda$ by Lemma \ref{lem7}. Since $\mathcal{F}$ is dense in $C(X,GL_d(\mathbb{R}))$, there exists a sequence $\{A_n\}_{n=1}^{+\infty}$ of $\mathcal{F}$ with $\lim\limits_{n\to+\infty}A_n=A$. Choose $\mu_{n}\in\cM^\Lambda_{max}(A_n)\subset F$ for each $n\geq1$, by Lemma \ref{lem3}, we have that $\lim\limits_{n\to+\infty}\mu_{n}=\mu$. Therefore, $F$ is dense in $\Lambda$.
\end{proof}

\section{Proofs of Corollary \ref{maincorollary-1}, \ref{maincorollary-2} and \ref{maincorollary-3}}\label{sec4}
In this section, we will finish the proof of Corollary \ref{maincorollary-1}, \ref{maincorollary-2} and \ref{maincorollary-3}.
\begin{proof}[Proof of Corollary \ref{maincorollary-1}]
	Suppose that $F:=\cM^e(X,T)\cap\bigcup\limits_{A\in\mathcal{F}}\cM_{max}(A)$. Given $\mu\in\cM^e(X,T)$, by Theorem \ref{lem8}, there exists $h\in C(X)$ such that $\cM_{max}(h)=\{\mu\}$. Let $A=e^hI_d$, then $\cM_{max}(A)=\{\mu\}$ and $A\in\tilde{\mathcal{C}}\cap\mathcal{U}$ by Lemma \ref{lem7}. Since $\mathcal{F}$ is dense in $C(X,GL_d(\mathbb{R}))$, there exists a sequence $\{A_n\}_{n=1}^{+\infty}$ of $\mathcal{F}$ with $\lim\limits_{n\to+\infty}A_n=A$. Choose $\mu_{n}\in\cM_{max}(A_n)\subset F$ for each $n\geq1$, by Lemma \ref{lem3}, we have that $\lim\limits_{n\to+\infty}\mu_{n}=\mu$. Therefore, $F$ is dense in $\cM^e(X,T)$.
\end{proof}
\begin{proof}[Proof of Corollary \ref{maincorollary-2}]
	Since $\#\cM^e(X,T)<+\infty$, let $\Lambda=\cM^e(X,T)$, then $\Lambda$ is a nonempty and compact subset of $\cM(X,T)$. By Theorem \ref{maintheorem-1} (2), $C(X,GL_d(\mathbb{R}))\setminus\mathcal{U}^\Lambda$ is nowhere dense in $C(X,GL_d(\mathbb{R}))$. Since $\mathcal{U}^\Lambda=\mathcal{U}$, we have that $C(X,GL_d(\mathbb{R}))\setminus\mathcal{U}$ is nowhere dense in $C(X,GL_d(\mathbb{R}))$.
\end{proof}
\begin{proof}[Proof of Corollary \ref{maincorollary-3}]
	When $\#\cM^e(X,T)>1$, by Theorem \ref{the2.3}, $\mathcal{U}\subset\mathcal{R}$. By Theorem \ref{maintheorem-1} (1), $\mathcal{R}$ is residual in $C(X,GL_d(\mathbb{R}))$. When $1<\#\cM^e(X,T)<+\infty$, by Corollary \ref{maincorollary-2}, $C(X,GL_d(\mathbb{R}))\setminus\mathcal{R}$ is nowhere dense in $C(X,GL_d(\mathbb{R}))$. 
\end{proof}

\bigskip

$\mathbf{Acknowledgements.}$ X. Tian is supported by National Natural Science Foundation of China (grant No.12071082) and in part by Shanghai Science and Technology Research Program (grant No.21JC1400700).

\end{document}